\author {C. Rosendal}
\address{Department of Mathematics\\University of Maryland\\4176 Campus Drive - William E. Kirwan Hall\\College Park, MD 20742-4015\\USA}
\email{rosendal@umd.edu}
\urladdr{sites.google.com/view/christian-rosendal/}
\author {J. Zomback}
\address{Department of Mathematics\\University of Maryland\\4176 Campus Drive - William E. Kirwan Hall\\College Park, MD 20742-4015\\USA}
\email{zomback@umd.edu}
\urladdr{https://sites.google.com/umd.edu/zomback/}
\declaretheorem[name=Theorem,numberwithin=section]{thm}
\def\subsection{\@startsection{subsection}{2}%
  \z@{.8\linespacing\@plus.7\linespacing}{.5\linespacing}%
  {\normalfont\bfseries}}
\def\@seccntformat#1{%
  \protect\textup{%
    \protect\@secnumfont
    \expandafter\protect\csname format#1\endcsname 
    \csname the#1\endcsname
    \protect\@secnumpunct
  }%
}
\def\l@section{\@tocline{1}{5pt}{0pc}{}{}}
\renewcommand{\tocsection}[3]{%
	\indentlabel{\@ifnotempty{#2}{\makebox[20pt][l]{%
				\ignorespaces#1 #2.\hfill}}}\sc #3\dotfill}
\newdimen{\tocsubsecmarg}
\def\l@subsection{\@tocline{2}{3pt}{0pc}{\tocsubsecmarg}{}}
\renewcommand{\tocsubsection}[3]{%
	\indentlabel{\@ifnotempty{#2}{\makebox[30pt][l]{%
				\ignorespaces#1 #2.\hfill}}}#3\dotfill}
\let\oldtocsubsection=\tocsubsection
\renewcommand{\tocsubsection}[2]{\hspace{3em} \oldtocsubsection{#1}{#2}}
\numberwithin{equation}{section}
\theoremstyle{plain}
\newtheorem{lemma}[equation]{Lemma}
\crefname{prop}{Proposition}{Propositions}
\newtheorem{prop}[equation]{Proposition}
\newtheorem{theorem}[equation]{Theorem}
\crefname{obs}{Observation}{Observations}
\crefname{cor}{Corollary}{Corollaries}
\newtheorem{cor}[equation]{Corollary}
\def\@empty{}
\def\ifemptycredit#1{%
	\def\tmp{#1}%
	\ifx\tmp\@empty%
	\else%
	{~(#1)}%
	\fi%
}
\newenvironment{namedthm*}[2][]{
\medskip\par\noindent \textbf{#2}\ifemptycredit{#1}\textbf{.}\itshape\xspace
}{\medskip}
\theoremstyle{definition}
\crefname{defn}{Definition}{Definitions}
\newtheorem{defn}[equation]{Definition}
\crefname{problem}{Problem}{Problem}
\newtheorem{problem}[equation]{Problem}
\crefname{example}{Example}{Examples}
\newtheorem{example}[equation]{Example}
\crefname{question}{Question}{Question}
\newtheorem{question}[equation]{Question}
\theoremstyle{remark}
\crefname{remark}{Remark}{Remarks}
\newtheorem{remark}[equation]{Remark}
\newtheorem{claim}[equation]{Claim}
\declaretheoremstyle[
spaceabove=\topsep, 
spacebelow=6pt,
headfont=\normalfont\itshape,
notefont=\normalfont, notebraces={(}{)},
bodyfont=\normalfont,
postheadspace=4pt,
qed=\mbox{\smaller[4]$\boxtimes$}
]{claimproofstyle}
\newcommand{\N}{\mathbb{N}}
\newcommand{\Q}{\mathbb{Q}}
\newcommand{\R}{\mathbb{R}}
\newcommand{\U}{\mathbb{U}}
\newcommand{\Z}{\mathbb{Z}}
\newcommand{\norm}[1]{\lVert#1\rVert}
\newcommand{\Norm}[1]{\big\lVert#1\big\rVert}
\newcommand{\NORMM}[1]{\bigg\lVert#1\bigg\rVert}
\newcommand{\equi}{\Leftrightarrow}
\newcommand{\inv}{^{-1}}
\renewcommand {\a} {\forall}
\newcommand{\maps}[1]{\mathop{\overset{#1}\longrightarrow}}
\newcommand{\mathes}[1]{\[\begin{split}{#1}\end{split}\]}
\newcommand{\conv}[2]{\mathop{\underset{#2}{\overset{#1}\longrightarrow}}}
\newcommand*{\defeq}{\mathrel{\vcenter{\baselineskip0.5ex \lineskiplimit0pt \hbox{\scriptsize.}\hbox{\scriptsize.}}}=}
\newcommand{\set}[1]{\left\{ #1 \right\}}
\title{Asymptotically spherical groups}
\date{\today}
\thanks{C.R. was partially supported by the U.S. National Science Foundation under Grant Number DMS-2246986. The authors are grateful to Sebastian Hurtado and Denis Osin for inspiration and insightful comments.}
\begin{document}

\begin{abstract}
    We define a notion of \emph{asymptotically spherical} topological groups, which says that spheres of large radius with respect to any maximal length function are still spherical with respect to any other maximal length function.
    This is a strengthening of a related condition introduced by Sebastian Hurtado, which we call bounded eccentricity.
    Our main result is a partial characterization of which groups are asymptotically spherical, and we also give an example of a discrete, bounded eccentric group who fails to be asymptotically spherical.
\end{abstract}

\maketitle

\tableofcontents

\section{Maximal length functions}
A {\em length function} on a group $G$ is a function $\ell\colon G\to [0,\infty[$ satisfying, for all $x,y\in G$,
\begin{enumerate}
    \item\label{pseudolength} $\ell(x)=0$ iff $x=1$,
    \item $\ell(x\inv)=\ell(x)$,
    \item $\ell(xy)\leqslant \ell(x)+\ell(y).$
\end{enumerate}
We note that there is a bijective correspondence between length functions $\ell$  and left-invariant metrics $d$ on $G$ given by the formulas
\mathes{
d(x,y)=\ell(x\inv y) \qquad\text{and}\qquad
\ell(x)=d(x,1).
}

If, furthermore, $G$ is a topological group, we say that a length function $\ell$ on $G$ is {\em compatible} provided that the corresponding metric $d$ induces the topology of $G$ or, equivalently, if the open balls 
$$
B_\ell(r)=\{x\in G\;|\; \ell(x)<r\}
$$
form a neighbourhood basis at the identity $1\in G$. Observe that, in this case, not only is $G$ metrisable as a topological space, but the topology is given by a left-invariant metric. In fact, by a result due independently to G. Birkhoff \cite{Birkhoff} and S. Kakutani \cite{Kakutani}, a topological group is metrisable if and only if it admits a left-invariant metric and these conditions are also equivalent to $G$ being Hausdorff and first countable. We shall henceforth restrict our attention to metrisable topological groups, which by our discussion are those that admit compatible length functions. Note that this class includes all discrete groups, on which all length functions taking integral values are compatible.

A  length function $\ell$ on $G$ is said to be {\em maximal} provided that it is compatible and that, for all other compatible length functions $\ell'$ on $G$, we have that 
$$
\ell'\leqslant K\ell+C
$$
for some constants $K$ and $C$. Thus, if $\ell$ and $\ell'$ are both maximal length functions on $G$, then $\ell$ and $\ell'$ are {\em quasi-isometric}, meaning that 
$$
\tfrac 1K\ell-C\leqslant \ell'\leqslant K\ell+C
$$
for some appropriate constants $K$ and $C$. Whereas not every metrisable topological group admits a maximal length function, it is possible both to give intrinsic characterisations of maximal length functions \cite[Proposition 2.72]{Rosendal} and characterise the {\em Polish}, i.e., separable and completely metrisable, topological groups that admit such \cite[Theorem 2.73]{Rosendal}. In this context, let us point out that maximal length functions and the corresponding left-invariant metrics induce a canonical quasimetric structure on the group, i.e., a canonical large scale geometric structure. In particular, when $G$ admits a maximal length function, we may unambiguously write 
$$
x\to \infty
$$
to indicate that $\ell(x)\to \infty$ for any maximal length function $\ell$. Of course, for example, when $G$ is compact, then every compatible length function is bounded. More generally, a metrisable topological group is said to be {\em globally bounded} in case every compatible length function is bounded.  Therefore, as we are interested in the asymptotics of length functions as $x\to \infty$, we will exclusively consider  groups that admit unbounded maximal  length functions and thus on which all maximal length functions are unbounded.

Our aim with this paper is to exhibit and study a rigidity phenomenon for a range of topological groups, which seems to have been overlooked before. The following definition is central to this aim.

\begin{defn}
Let $G$ be a metrisable topological group admitting unbounded maximal length functions. We say that $G$ is {\em asymptotically spherical} if, for any two maximal length functions $\ell_1$ and $\ell_2$, the limit
$$
\lambda=\lim_{x\to \infty}\frac{\ell_1(x)}{\ell_2(x)}
$$
exists. This means that, for all $\eta>1$ and all sufficiently large radii $r$, we have 
$$
B_{\ell_2}\big(\tfrac r\eta\big)\;\;\subseteq\;\; B_{\ell_1}(\lambda r)\;\;\subseteq\;\; B_{\ell_2}\big(\eta r\big)
$$
and thus asymptotically the $\ell_2$-spheres of $G$ are  $\ell_1$-spherical, which explains our terminology.
\end{defn}

The main result of our paper is a result that allows us to identify a number of asymptotically spherical groups via their actions on metric spaces. To explain this, we need to expand our terminology a bit. Namely, a map $\phi\colon M\to X$ between two metric spaces is said to be a {\em quasi-isometric embedding} if there are constants $C,K$ so that, for all $a,b\in M$, 
$$
\tfrac 1Kd(a,b)-C\leqslant d\big(\phi(a),\phi(b)\big)\leqslant Kd(a,b)+C.
$$
If $G$ is a metrisable topological group admitting maximal length functions, then a map $\phi\colon G\to X$ is a {\em quasi-isometric embedding} if it is a quasi-isometric embedding with respect to the left-invariant metric associated with a maximal length function. This is clearly independent of the specific choice of maximal length function.

\begin{restatable}{thm}{main}\label{thm:main}
    Suppose $G$ is a metrisable topological group and $G\curvearrowright X$  an isometric action on a metric space so that, for some $a\in X$, the orbital map $g\mapsto ga$ is a quasi-isometric embedding.
Assume also that 
\mathes{
\a \eta>1\;\;\exists R\;\; \a x,y\in X\;\;\inf\bigg(\sum_{i=1}^nd(z_{i-1},z_{i})\;\bigg|\; z_0=x,\; z_n=y\;\&\; d(z_{i-1},z_i)<R\bigg)<\eta\cdot d(x,y)
}
and that, for some increasing function $\sigma\colon \R_+\to \R_+$ and all $x,y,z,u\in  X$ with $d(x,y)\leqslant d(z,u)$, we have 
$$
\inf_{h\in G}\;d_{\sf Hausdorff}\big(\{h(z),h(u)\},\{x,y\}\big)< \sigma\big(d(z,u)-d(x,y)\big).
$$
Then $G$ is asymptotically spherical.
\end{restatable}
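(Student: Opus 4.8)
The plan is to fix an isometric action as in the statement together with a base point $a\in X$, to abbreviate $\ell_X(g)\defeq d(ga,a)$, and to observe that $\ell_X$ is a length function on $G$ which, by the quasi-isometric embedding hypothesis, is quasi-isometric to every maximal length function on $G$; say $\tfrac1K\ell(g)-C\leqslant \ell_X(g)\leqslant K\ell(g)+C$ for such an $\ell$. Since $\ell_1(x)/\ell_2(x)=\big(\ell_1(x)/\ell_X(x)\big)\big/\big(\ell_2(x)/\ell_X(x)\big)$ and (as $G$ is not globally bounded) $\ell_X$ is unbounded, it will suffice to prove that for \emph{every} maximal length function $\ell$ the limit $\lim_{x\to\infty}\ell(x)/\ell_X(x)$ exists and is strictly positive; so fix $\ell$.

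The first step will be to extract from the second displayed hypothesis the rigidity estimate
\[
\big|\ell(g)-\ell(g')\big|\;\leqslant\;\sigma'\big(|\ell_X(g)-\ell_X(g')|\big)\qquad(g,g'\in G),
\]
for a suitable increasing $\sigma'$ (something like $\sigma'(t)=2K\sigma(t)+2KC$): assuming $\ell_X(g)\geqslant \ell_X(g')$, apply the hypothesis to $\{z,u\}=\{a,ga\}$ and $\{x,y\}=\{a,g'a\}$ to get $h\in G$ carrying $\{a,ga\}$ into the $\sigma(\ell_X(g)-\ell_X(g'))$-Hausdorff-neighbourhood of $\{a,g'a\}$, then match up the two points and use the quasi-isometric embedding to bound the lengths of the ``small'' elements that occur (such as $h$ and $g^{-1}h^{-1}g'$). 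I would record two consequences: (i) writing $\phi(t)\defeq\sup\{\ell(g):\ell_X(g)=t\}$ for $t$ in the range $\mathcal R$ of $\ell_X$, one has $\ell(g)=\phi(\ell_X(g))+O(1)$ \emph{identically} (the case of a zero gap in the estimate), so it is enough to understand $\phi$; and (ii) $\phi$ is coarsely $\sigma'$-Lipschitz on $\mathcal R$. I would also combine the first hypothesis (which makes $X$ coarsely geodesic, in particular coarsely connected at some scale $R_0$, and since $X\supseteq Ga$ is unbounded, there are points at every prescribed distance from a given one up to error $R_0$) with the second hypothesis, applied to a pair of far-apart orbit points $p,q$ and a pair $\{x,x'\}$ with $d(x,x')$ slightly below $d(p,q)$, to conclude that the orbit $Ga$ is $D$-dense in $X$ for $D=\sigma(R_0+1)$; in particular $\mathcal R$ is syndetic in $[0,\infty[$ with some gap bound $D'$.

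Next I would prove that the non-decreasing profile $\psi(t)\defeq\sup\{\ell(g):\ell_X(g)\leqslant t\}$ satisfies $\psi(t)/t\to\lambda$ for some $\lambda\in[\tfrac1K,K]$. Given $\eta>1$, combining the first hypothesis (an efficient $R'$-chain from $a$ to $ga$ of total length below $\eta_0\ell_X(g)$), a thinning step (retain only chain points mutually at distance $\geqslant T$, so their number is $O(\ell_X(g)/T)$) and $D$-density of $Ga$ (snap the retained points onto $Ga$, at cost $\leqslant 2D$ per step) yields — once $\eta_0$ and $T$ are chosen so the slacks multiply to below $\eta$ — a chain of \emph{orbit} points $a=p_0,\dots,p_m=ga$ with jumps $\leqslant R_1=R_1(\eta)$ and total below $\eta\,\ell_X(g)$. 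Cutting this into $n$ consecutive arcs of nearly equal length and factoring $g=g^{(1)}\cdots g^{(n)}$ accordingly gives $\ell_X(g^{(k)})<\tfrac\eta n\ell_X(g)+R_1$, hence $\psi(t)\leqslant n\,\psi\big(\tfrac\eta n t+R_1\big)$ for every $n$ and all large $t$. Plugging in $n=\lceil\eta t/u_0\rceil$, sending $t\to\infty$, then $u_0\to\infty$ along a sequence realising $\liminf_t\psi(t)/t$ (using (ii) to absorb the additive $R_1$), and finally $\eta\to1$, forces $\limsup_t\psi(t)/t\leqslant\liminf_t\psi(t)/t$; positivity of $\lambda$ is immediate from $\psi(t)\geqslant\tfrac1K t-O(1)$.

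Finally I would upgrade this to $\phi(t)/t\to\lambda$; by (i) that yields $\ell(g)/\ell_X(g)\to\lambda$, and the opening reduction then finishes the theorem. Since $\phi\leqslant\psi$, only $\liminf_{t\in\mathcal R}\phi(t)/t\geqslant\lambda$ is at issue, and this is the step I expect to be the main obstacle: for an \emph{arbitrary} increasing $\sigma$ one cannot transfer information across a gap of size comparable to $t$. The point is that no such gap occurs. If $t_j\to\infty$ realises $\liminf_t\phi(t)/t=:\mu$, pick a level $s_j\leqslant t_j$ at which $\psi(t_j)$ is (nearly) attained; then $\psi(t_j)-1\leqslant\psi(s_j)\leqslant\psi(t_j)$ (so $s_j\to\infty$), and comparing with $\psi(s_j)=\lambda s_j+o(s_j)$ and $\psi(t_j)=\lambda t_j+o(t_j)$ forces $s_j/t_j\to1$, i.e. $t_j-s_j=o(t_j)$. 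Hence one can climb from level $s_j$ to level $t_j$ through $O(t_j-s_j)=o(t_j)$ successive levels of $\mathcal R$ at spacing $\leqslant 1+D'$, paying by (ii) only the \emph{constant} $\sigma'(1+D')$ at each step; therefore $\phi(t_j)\geqslant\phi(s_j)-o(t_j)\geqslant\psi(t_j)-o(t_j)=\lambda t_j-o(t_j)$, so $\mu=\lim_j\phi(t_j)/t_j\geqslant\lambda$, whence $\mu=\lambda$. As also $\limsup_t\phi(t)/t\leqslant\lambda$, this gives $\phi(t)/t\to\lambda$ and completes the proof.
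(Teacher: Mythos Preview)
Your argument is correct, but it proceeds along a genuinely different route from the paper. The paper fixes $\eta>1$, sets $\lambda=\limsup_{g\to\infty}\tau(g)/\ell(g)$, and then picks a \emph{single} near-extremal element $g=g_r$ with $\tau(g)/\ell(g)$ close to $\lambda$; for an arbitrary $f$ it takes an efficient $R$-chain from $a$ to $f(a)$, regroups it into blocks of length between $\tau(g)$ and $\tau(g)+R$, and uses the Hausdorff hypothesis at each block to align it with the pair $\{a,g(a)\}$. An induction then shows $w_j$ lies within $\sigma(R)$ of $\big(V\{g,g^{-1}\}\big)^j\!\cdot a$ for a fixed bounded set $V$, yielding directly $\lambda\ell(f)<\eta^2\tau(f)+K'$. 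By contrast, you front-load the Hausdorff hypothesis into the uniform rigidity estimate $|\ell(g)-\ell(g')|\leqslant\sigma'(|\ell_X(g)-\ell_X(g')|)$ (so $\ell\approx\phi\circ\ell_X$), use both hypotheses to get cobounded\-ness of the orbit and syndeticity of $\mathcal R$, and then run a Fekete-style subadditivity argument on the profiles $\psi$ and $\phi$. What this buys you is a clean structural byproduct---that $\ell$ is essentially a function of $\ell_X$---and a modular endgame; what the paper's argument buys is brevity: it never needs orbit density, never needs $\mathcal R$ syndetic, and never introduces $\phi,\psi$, because the inductive alignment with a single extremal $g$ does all the work in one pass. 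Both arguments ultimately use the geodesic hypothesis to chain and the Hausdorff hypothesis to compare pairs, but the paper applies the latter step-by-step along the chain, whereas you apply it once globally to extract rigidity and once more to snap onto the orbit.
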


\begin{restatable}{cor}{cor:main}\label{cor:main}
The following topological groups are asymptotically spherical.
\begin{enumerate}
\item $\Z$, $D_\infty$, $\R$, 
\item ${\sf O}(n)\ltimes \R^n$ and ${\sf SO}(n)\ltimes \R^n$, for $n\geqslant 1$,
\item ${\sf Aut}(T_n)$, for $n=2,3,4,\ldots, \aleph_0$, where $T_n$ is the $n$-regular tree,
\item ${\sf Isom}(\Z\U)$, ${\sf Isom}(\Q\U)$, ${\sf Isom}(\U)$, where $\Z\U$, $\Q\U$ and $\U$ are respectively the integral, rational and full Urysohn metric spaces,
\item the group ${\sf Aff}(X)={\sf Isom}(X)\ltimes X$ of all affine isometries of an almost transitive Banach space $X$ whose group ${\sf Isom}(X)$ of linear isometries is globally bounded,
\item ${\sf Aff}(L^p[0,1])$  for all $1\leqslant p<\infty$.
\end{enumerate}
\end{restatable}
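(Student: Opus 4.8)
The plan is to deduce \cref{cor:main} entirely from \cref{thm:main}: for each group $G$ on the list I would exhibit an isometric action $G\curvearrowright X$ on a suitable metric space, check that the orbital map at a well-chosen basepoint is a quasi-isometric embedding, and verify the two remaining geometric hypotheses of \cref{thm:main}. The actions I have in mind are the obvious ones: in (1), the actions of $\Z$, $D_\infty$ and $\R$ on $X=\R$ by translations and reflections; in (2), the standard actions of ${\sf O}(n)\ltimes\R^n$ and ${\sf SO}(n)\ltimes\R^n$ on Euclidean space $X=\R^n$; in (3), the action of ${\sf Aut}(T_n)$ on the vertex set of the $n$-regular tree with its graph metric; in (4), the tautological actions of ${\sf Isom}(\Z\U)$, ${\sf Isom}(\Q\U)$, ${\sf Isom}(\U)$ on $X=\Z\U,\Q\U,\U$; and in (5)--(6), the action of ${\sf Aff}(X)={\sf Isom}(X)\ltimes X$ on $X$ by affine isometries, with $X=L^p[0,1]$ in (6). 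Since $L^p[0,1]$ is almost transitive and ${\sf Isom}(L^p[0,1])$ is globally bounded --- classical facts I would simply cite --- item (6) is the instance $X=L^p[0,1]$ of item (5), so it suffices to handle (1)--(5).

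The first hypothesis of \cref{thm:main}, that the orbital map $g\mapsto ga$ is a quasi-isometric embedding (with $a$ the origin of $\R$ or $\R^n$, a base vertex of $T_n$, a chosen point of the Urysohn space, or $0\in X$): in (1) this is immediate since $\Z$ and $\R$ act simply transitively and the stabiliser of $0$ in $D_\infty$ and in ${\sf O}(1)\ltimes\R$ is finite, so $G$ is in fact quasi-isometric to $\R$; in (2) the point stabiliser is the compact group ${\sf O}(n)$, resp.\ ${\sf SO}(n)$, so $G$ is quasi-isometric to $\R^n$; in (3)--(5) this is precisely the assertion that the relevant point stabiliser is coarsely bounded and the action cobounded, and for ${\sf Aut}(T_n)$, for the isometry groups of the Urysohn spaces, and for ${\sf Aff}(X)$ with ${\sf Isom}(X)$ globally bounded this is part of the known large-scale description of these groups, which I would cite from \cite{Rosendal} rather than reprove. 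The second hypothesis --- that for every $\eta>1$ there is $R$ such that any two points of $X$ are joined by an $R$-bounded chain of total length $<\eta\cdot d(x,y)$ --- holds with $R=1$ whenever $X$ is a geodesic metric space, which disposes of (1), (2), (5), (6) and of $X=\U$; for $X=\Q\U$ one subdivides a pair at rational distance into rational steps of length $\le\tfrac12$ and realises the intermediate points with the Urysohn extension property, and for $X=\Z\U$ or $X=T_n$ one uses unit steps (so $R=2$), again realising the intermediate points by the Urysohn property, respectively by homogeneity of the regular tree. In every case the infimum in question is in fact equal to $d(x,y)$.

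The heart of the argument is the third hypothesis, the approximate-homogeneity estimate on pairs of points. In (1)--(4) the acting group is transitive on ordered pairs of points at a prescribed distance: for $\R$ and $\Z$ by translations together with a reflection; for $\R^n$ via a translation followed by a rotation --- here one uses that ${\sf O}(n)\ltimes\R^n$ is the full isometry group of $\R^n$ and that, for $n\geqslant2$, ${\sf SO}(n)$ acts transitively on each sphere, the case ${\sf SO}(1)\ltimes\R=\R$ being already covered by (1); for $T_n$ by vertex-transitivity together with the freedom to carry one geodesic segment onto another, valid because $T_n$ is $n$-regular; and for the Urysohn spaces by ultrahomogeneity. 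Given $x,y,z,u$ with $\delta_1:=d(x,y)\leqslant d(z,u)=:\delta_2$, I would pick $h\in G$ sending $z$ to $x$ and $u$ to the point $w$ with $d(x,w)=\delta_2$ lying ``beyond $y$'' --- on the geodesic ray from $x$ through $y$ in the geodesic cases, or the point with $d(x,w)=\delta_2$ and $d(y,w)=\delta_2-\delta_1$ in the Urysohn cases, whose existence I would confirm by checking the triangle inequalities for $\{x,y,w\}$ and invoking the extension property. A one-line computation then gives $d_{\sf Hausdorff}\big(\{h(z),h(u)\},\{x,y\}\big)=\delta_2-\delta_1$, so the hypothesis holds with $\sigma(t)=t+1$. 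In (5)--(6) the linear isometry group is only almost transitive on spheres, so rather than mapping $u$ exactly to $w$ one only obtains a point within $\e$ of $w$ for arbitrary $\e>0$; this makes the displayed infimum over $h\in G$ at most $\delta_2-\delta_1$, which is still $<\delta_2-\delta_1+1=\sigma(\delta_2-\delta_1)$, so the same $\sigma$ works.

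The main obstacle, and the place I would spend most care, is exactly this last step in the Banach-space cases (5)--(6): one has to be sure that passing to the infimum over $G$ genuinely absorbs the error coming from mere almost transitivity, and one has to pin down the ``overshoot'' point $w$ so that the Hausdorff distance is controlled by the excess $d(z,u)-d(x,y)$ and not by $d(z,u)$ itself. A secondary --- but, in a self-contained treatment, lengthier --- point would be the verification that the orbital maps for ${\sf Aut}(T_{\aleph_0})$, ${\sf Isom}(\U)$ (and its integral and rational variants) and ${\sf Aff}(L^p[0,1])$ are quasi-isometric embeddings, i.e.\ that the respective point stabilisers are coarsely bounded; I would import these facts from the existing coarse-geometry literature.
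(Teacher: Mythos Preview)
Your approach is the paper's: it too verifies the hypotheses of Theorem~\ref{thm:main} (packaged through Corollary~\ref{cor:coarsely proper} and the Milnor--Schwarz lemma) for exactly the actions you list, cites \cite{Rosendal} for the coarse boundedness of the relevant point stabilisers, and carries out the same explicit ``overshoot'' estimate in the Banach-space case to obtain $d_{\sf Hausdorff}\big(\{h(z),h(u)\},\{x,y\}\big)\leqslant\big|\norm{u-z}-\norm{y-x}\big|+\e$. One small slip to fix: $\Z$ and $\R$, acting by translations only, are \emph{not} transitive on ordered pairs at a prescribed distance (there is no reflection available), so your recipe ``send $z$ to $x$ and $u$ to the point beyond $y$'' can fail when $u-z$ and $y-x$ have opposite signs; this is harmless, because the Hausdorff condition in Theorem~\ref{thm:main} concerns unordered pairs, and you may simply match $h(z)$ with $y$ and $h(u)$ with $x$ to recover the same bound $\sigma(t)=t+1$.
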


Although we are not able to fully characterise the class of asymptotically spherical groups, we do have a limited inverse to Theorem \ref{thm:main}. Namely, the asymptotic geodecity condition can be recovered from asymptotic sphericity itself. 

\begin{restatable}{thm}{main:inverse}
Suppose $\ell$ is a maximal length function on an asymptotically spherical group $G$. Then, for all $\eta>1$ and all sufficiently  large $r$, the following is valid for all $x$, 
$$
\inf\Big(\sum_{i=1}^k\ell(v_i)\;\Big|\; x=v_1\cdots v_k\;\;\&\;\; \ell(v_i)<r\Big)\;<\;\eta\cdot \ell(x).
$$
\end{restatable}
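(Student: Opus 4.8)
The plan is to read the displayed inequality as the assertion that the left-invariant metric $d$ attached to $\ell$ is itself \emph{asymptotically geodesic} — this is precisely the hypothesis imposed on $X$ in Theorem~\ref{thm:main}, now applied to $X=G$ with its own metric — and to force it by using asymptotic sphericity to compare $\ell$ against a natural family of competing maximal length functions whose asymptotic cones are geodesic by construction.

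For $r>0$ I would introduce
\[
\ell_r(x)\;\defeq\;\inf\Big(\sum_{i=1}^k\ell(v_i)\;\Big|\; x=v_1\cdots v_k\;\;\&\;\;\ell(v_i)<r\Big),
\]
and first record its elementary features: it is subadditive and symmetric; $\ell\leqslant\ell_r$ by the triangle inequality; $\ell_r=\ell$ on $B_\ell(r)$ via the one‑term decomposition; and, since a group carrying a maximal length function is generated by a set on which $\ell$ is bounded (the theory of maximal length functions, see \cite{Rosendal}), $B_\ell(r)$ generates $G$ for all $r$ past some $r_0$. For such $r$, $\ell_r$ is finite‑valued and compatible (its small balls coincide with those of $\ell$), and in fact \emph{maximal}: chaining generators gives $\ell_r(x)\leqslant r\cdot w_{B_\ell(r)}(x)$, where $w_{B_\ell(r)}$ is the word length relative to the generating set $B_\ell(r)$; that word length is a maximal length function, hence quasi-isometric to $\ell$, and combined with $\ell\leqslant\ell_r$ this shows $\ell$ and $\ell_r$ are quasi-isometric.

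Now I would feed the pair $\ell,\ell_r$ into asymptotic sphericity: the limit $\lambda_r\defeq\lim_{x\to\infty}\ell_r(x)/\ell(x)$ exists, $\lambda_r\geqslant 1$, and $\lambda_r$ is non-increasing in $r$. The theorem reduces to $\inf_{r\geqslant r_0}\lambda_r=1$, because once $\lambda_R<\eta$ one has $\ell_R(x)<\eta\ell(x)$ for all $x$ with $\ell(x)$ large, while for the rest one enlarges $R$ past $\ell(x)$ and uses $\ell_R(x)=\ell(x)$. To get $\inf_r\lambda_r=1$ I would pass to asymptotic cones. Fix $r\geqslant r_0$. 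A near-optimal $\ell_r$-decomposition of $g_n\inv h_n$ is a chain of steps of $\ell$-length $<r$, so in any ultralimit $\mathrm{Cone}_\omega(G,d_{\ell_r})$ it degenerates to a path between $[(g_n)]$ and $[(h_n)]$ of length equal to their distance; hence every asymptotic cone of $(G,d_{\ell_r})$ is a geodesic metric space. Since $\ell$ and $\ell_r$ are quasi-isometric the cones of $(G,d_\ell)$ and $(G,d_{\ell_r})$ share an underlying set, and, as $\ell_r/\ell\to\lambda_r$, the identity is a $\lambda_r$-homothety between them; a homothetic image of a geodesic space is geodesic, so every asymptotic cone of $(G,d_\ell)$ is geodesic as well.

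The remaining and genuinely delicate point is to convert geodecity of the cones — which only constrains scales going to infinity — back into a statement about the fixed finite radius $r$. Here one argues by contradiction: if there is $\eta>1$ admitting a witness $x_R$ with $\ell_R(x_R)\geqslant\eta\,\ell(x_R)$ for every $R$, a localisation argument should show that one may take $\ell(x_R)$ comparable to $R$ (possibly shrinking $\eta$ to some $\eta'>1$); then, rescaling by $\ell(x_R)$ and subdividing a cone geodesic from the base point to $[(x_R)]$ into enough pieces that every lifted step has $\ell$-length $<R$, one writes $x_R$ as a product of $\ell$-pieces of size $<R$ of total $\ell$-length $(1+o(1))\ell(x_R)$, contradicting $\ell_{R}(x_R)\geqslant\eta'\,\ell(x_R)$ for $R$ large. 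The main obstacle is exactly this localisation — the combinatorial claim that inefficiency of $r$-bounded decompositions, if it occurs at all, already occurs at scale comparable to $r$ — for without it the cone geodesics live at a scale the fixed threshold $r$ cannot reach; the maximality of $\ell_r$, the homothety of cones coming from sphericity, and the discretisation of cone geodesics into admissible chains are comparatively routine.
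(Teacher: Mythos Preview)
Your reduction is correct and matches the paper's exactly: introduce $\ell_r$ (the paper writes $\ell_{B_\ell(r)}$), verify it is a compatible maximal length function, invoke sphericity to get $\lambda_r=\lim_{x\to\infty}\ell_r(x)/\ell(x)\geqslant 1$, observe $\lambda_r$ is non-increasing, and reduce to $\beta:=\inf_r\lambda_r=1$. But the step you yourself flag as the ``main obstacle'' is a genuine gap, not a routine verification. The conclusion that all asymptotic cones of $(G,d_\ell)$ are geodesic uses only the existence of a \emph{single} $\lambda_r$, so it cannot by itself encode the interaction between scales that is needed here. Concretely, geodecity of cones yields for each $\epsilon>0$ an unquantified threshold $R_\epsilon$ beyond which $\epsilon$-midpoints exist; without control of $R_\epsilon$ against $\epsilon$ one cannot iterate bisection from scale $\ell(x)$ down to scale $r$ with bounded multiplicative loss, and there is no evident reason why witnesses to $\ell_R(x)\geqslant\eta\,\ell(x)$ must occur with $\ell(x)$ comparable to $R$. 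Your cone argument thus proves a true but strictly weaker fact.

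The paper bypasses cones and localisation entirely via a short two-scale computation. Assuming $\beta>1$, fix $\eta$ with $\eta^4<\beta$ and radii $r_1\ll r_2\ll r_3$ so that $\ell_{r_1}/\ell$ and $\ell_{r_2+r_1}/\ell$ both lie in $(\beta/\eta,\,\eta\beta)$ beyond $r_2$ and $r_3$ respectively. Factor a long $y$ as $\prod v_i$ with $v_i\in B_\ell(r_1)$ and $\sum\ell(v_i)<\eta\beta\,\ell(y)$, then regroup consecutive $v_i$ into blocks $w_j$ with $r_2\leqslant\ell(w_j)<r_2+r_1$. Now sphericity is used as a \emph{lower} bound, twice: the $v_i$ in each block give $\sum_i\ell(v_i)\geqslant\ell_{r_1}(w_j)>\tfrac{\beta}{\eta}\ell(w_j)$, and the blocks give $\sum_j\ell(w_j)\geqslant\ell_{r_2+r_1}(y')>\tfrac{\beta}{\eta}\ell(y')$. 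Chaining these against the upper bound $\sum\ell(v_i)<\eta^2\beta\,\ell(y')$ (after absorbing the tail) forces $\eta^4>\beta$, a contradiction. The missing idea in your sketch is precisely this simultaneous use of the convergence $\ell_r/\ell\to\lambda_r$ from above (to produce an efficient factorisation) and from below (to lower-bound its cost) at two nested radii; it is what replaces your unproved localisation.
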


Alternatively, if $d$ is the left-invariant metric on $G$ associated with the maximal length function $\ell$, then the metric space $(G,d)$ satisfies the geodecity condition of Theorem \ref{thm:main}.

In particular in the context of discrete groups, asymptotic sphericity may be too strong and thus to have more examples it is useful to weaken it a bit. 

\begin{defn}
    We say that a metrisable topological group that admits unbounded maximal length functions has {\em bounded eccentricity} provided that there is a constant $\Theta$ so that, for all maximal length functions $\ell_1$, $\ell_2$ on $G$, we have 
$$
\ell_1-C\leqslant \lambda\ell_2\leqslant \Theta\ell_1+C
$$
for some appropriate constants $C$ and $\lambda$.
\end{defn}


We learned of this concept from Sebastian Hurtado, who was interested in the question of whether lattices in higher rank Lie groups would have bounded eccentricity. The following result is a proved by a simple variation of Theorem \ref{thm:main} and was also noted by Hurtado.   

\begin{restatable}{prop}{bounded}
The group $C_{4}\ltimes \Z^2$ has bounded eccentricity but fails to be asymptotically spherical.
\end{restatable}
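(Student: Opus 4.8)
Write $N=\Z^2\trianglelefteq G=C_4\ltimes\Z^2$, a normal subgroup of index $4$, and fix $c\in G$ of order $4$ projecting to a generator of $C_4$, so that conjugation by $c$ acts on $N$ as the $90^\circ$ rotation $\rho$. The plan is to treat the two assertions separately, the failure of asymptotic sphericity being elementary and the bounded eccentricity requiring a little large-scale geometry.

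\emph{Failure of asymptotic sphericity.} I would exhibit two maximal length functions with non-convergent ratio. Let $\ell_1$ be the word length on $G$ for the finite generating set $\{\pm e_1,\pm e_2,\pm(e_1+e_2),\pm(e_1-e_2)\}\cup\{c,c\inv\}$ and $\ell_2$ the word length for $\{\pm e_1,\pm e_2\}\cup\{c,c\inv\}$; both are maximal since $G$ is finitely generated. The point is that they restrict to $N$ \emph{exactly} as the sup-norm and the $1$-norm: in any $G$-word representing $w\in N$, moving each occurrence of $c^{\pm1}$ to the right via $cs=\rho(s)c$ — legitimate because each of the two generating sets of $N$ above is $\rho$-invariant — produces a word $s_1\cdots s_m c^k$ with the $s_j$ still among those generators and $m$ no larger than the original length; since $N$ is abelian, equality with $w$ forces $c^k=1$ and $\sum_j s_j=w$, so $m$ is at least the corresponding $\Z^2$-word length of $w$. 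Hence $\ell_1|_N(w)=\max(|w_1|,|w_2|)$ and $\ell_2|_N(w)=|w_1|+|w_2|$. Thus $\ell_1(x)/\ell_2(x)$ equals $1$ along $x=(n,0)$ and $\tfrac12$ along $x=(n,n)$, both tending to infinity, so $\lim_{x\to\infty}\ell_1(x)/\ell_2(x)$ does not exist and $G$ is not asymptotically spherical.

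\emph{Bounded eccentricity.} I would pass from each maximal length function $\ell$ to its \emph{stable norm} $\hat\ell$, namely $\hat\ell(w)=\lim_n\ell(nw)/n$ for $w\in N$ (the limit exists by subadditivity), extended positively homogeneously to $\R^2$. Three facts drive the argument. (1) $\hat\ell$ is a genuine norm: comparing $\ell$ with a fixed word length via maximality and using that $N$ is undistorted in $G$ gives $\tfrac1K\norm w_1\leqslant\hat\ell(w)\leqslant\ell(w)\leqslant K\norm w_1$. (2) $\hat\ell$ is invariant under $\rho$ — this is where the $C_4$-factor matters: since $|\ell(cwc\inv)-\ell(w)|\leqslant 2\ell(c)$ for all $w\in N$, dividing by $n$ and letting $n\to\infty$ annihilates the constant, so $\hat\ell\circ\rho=\hat\ell$. (3) A limit-shape lemma: for any length function $\ell$ on $\Z^d$ with $\ell\leqslant C\norm{\cdot}_1$ — automatic here, since $\ell(w)\leqslant\sum_i|w_i|\,\ell(e_i)$ — one has $\ell(x)/\hat\ell(x)\to1$ as $x\to\infty$, i.e.\ the ball of radius $t$ lies within Hausdorff distance $o(t)$ of $t$ times the unit ball of $\hat\ell$. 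Granting these, for maximal $\ell_1,\ell_2$ on $G$, and using that $G$ is a bounded neighbourhood of $N$, fact (3) gives
$$
\limsup_{x\to\infty}\frac{\ell_1(x)}{\ell_2(x)}\;\leqslant\;\max_{v\neq0}\frac{\hat\ell_1(v)}{\hat\ell_2(v)}
\qquad\text{and}\qquad
\liminf_{x\to\infty}\frac{\ell_1(x)}{\ell_2(x)}\;\geqslant\;\min_{v\neq0}\frac{\hat\ell_1(v)}{\hat\ell_2(v)}.
$$
Now comes a planar-geometry fact: every $C_4$-invariant norm $\nu$ on $\R^2$ lies within a factor $\sqrt2$ of the Euclidean norm, because if $v_0\in\partial B_\nu$ has maximal Euclidean length then $v_0$, its $90^\circ$-rotate, and their negatives span — by convexity and $C_4$-symmetry — a square inside $B_\nu$ whose inradius is $1/\sqrt2$ times its circumradius. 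Hence for two $C_4$-invariant norms $\max_v(\hat\ell_1/\hat\ell_2)\leqslant 2\min_v(\hat\ell_1/\hat\ell_2)$; combined with the displayed inequalities and choosing $\lambda$ slightly above the $\limsup$ while absorbing additive errors, this yields $\ell_1-C\leqslant\lambda\ell_2\leqslant\Theta\ell_1+C$ with $\Theta=3$ (indeed with any $\Theta>2$), uniformly in the pair.

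The one non-formal step is the limit-shape lemma (3): without it the inequality $\hat\ell\leqslant\ell$ controls only one side, and the word-length quasi-isometry constants — which a priori depend on the pair $(\ell_1,\ell_2)$ — could wreck the uniformity of $\Theta$. What saves us is precisely that on $\Z^d$ every length function is $\norm{\cdot}_1$-Lipschitz, so the limit shape is genuinely approached; I expect this, and not anything about $C_4$, to be the main obstacle. An alternative route — presumably the ``simple variation of Theorem~\ref{thm:main}'' referred to in the text — is to run the proof of Theorem~\ref{thm:main} for the isometric action of $G$ on $(\R^2,\norm{\cdot}_1)$, which is genuinely geodesic and whose orbital map is a quasi-isometric embedding: here the pair-transporting hypothesis holds only up to a \emph{multiplicative} error,
$$
\inf_{h\in G}d_{\sf Hausdorff}\big(\{h(z),h(u)\},\{x,y\}\big)\;<\;\big(d(z,u)-d(x,y)\big)+2\sin(\tfrac\pi8)\cdot d(x,y),
$$
since only rotations by multiples of $90^\circ$ are available, so two directions can always be aligned to within $45^\circ$ but no better; feeding this weaker input through the argument of Theorem~\ref{thm:main} replaces the conclusion ``the limit exists'' by ``the eccentricity is bounded''.
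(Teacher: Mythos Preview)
Your proof is correct, and both halves differ in flavour from the paper's argument.

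For the failure of asymptotic sphericity, the paper simply observes that the pseudolength functions $\norm{\cdot}_1$ and $\norm{\cdot}_\infty$ on $C_4\ltimes\Z^2$ satisfy $\alpha(\norm{\cdot}_1,\norm{\cdot}_\infty)=\log 2$; you do essentially the same thing but take the extra care of realising these as honest word metrics on $G$ via the $\rho$-invariance of the two generating sets, which is a nice touch.

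For bounded eccentricity the approaches are genuinely different. The paper works directly: after normalising $\ell$ so that $\ell(ne_1)/n\to 1$, it uses conjugation by $\theta$ and the identity $2ne_2=(ne_1+ne_2)+\theta(ne_1+ne_2)\theta^{-1}$ to sandwich every maximal $\ell$ between $\tfrac12\norm{\cdot}_\infty$ and $\norm{\cdot}_1$, yielding an $\alpha$-diameter bound of $4\log 2$. Your route via the stable norm is more structural: you show that $\hat\ell$ is a genuine $C_4$-invariant norm on $\R^2$, that $\ell\sim\hat\ell$ by a limit-shape argument, and then invoke the planar fact that any $C_4$-invariant norm has Banach--Mazur distance at most $\sqrt2$ from Euclidean. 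This last step buys you something the paper does not get: your bound on the $\alpha$-diameter is $\log 2$, which matches the lower bound and therefore answers the question the paper explicitly leaves open after the proof. The price is the limit-shape lemma, which you correctly identify as the only substantive step; your outlined proof of it (polyhedral approximation of $\hat\ell$, replacing the extreme directions $v_i$ by large multiples $Nv_i$ to make $\ell(Nv_i)\approx N\hat\ell(v_i)$, then decomposing) is standard and sound.
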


Here $C_4$ is the cyclic group of order $4$ generated by the automorphism $\theta$ of $\Z^2$ given by rotation of angle $\frac \pi2$.

On the other hand, there are a plethora of examples of unbounded eccentric groups, namely, any direct product of two groups admitting unbounded maximal length functions. 

\begin{example}
    The group $\Z^2$ has unbounded eccentricity, as witnessed by the length functions $\ell_n(x,y) \defeq n|x| + |y|$.
\end{example}

The concepts of asymptotic sphericity and bounded eccentricity bear resemblance to Y. de Cornulier's property (PL) \cite{Cornulier}.  This was defined originally for locally compact groups, but see \cite[Section 3.8]{Rosendal} for the extension to all topological groups. Property (PL) is another rigidity property of length functions stating, in the case of a metrisable topological group $G$, that every compatible length function on the group is either bounded or coarsely proper. In fact, if $G$ is a Polish group with property (PL) and $A$ is a closed generating set for $G$, then the associated word length function $\rho_A$ is either bounded or is quasi-isometric to a maximal length function on $G$ \cite[Corollary 3.64]{Rosendal}. As it turns out, many of our examples do have property (PL). This includes ${\sf SO}(n)\ltimes \R^n$ for $n\geqslant 2$ \cite[Proposition 1.8]{Cornulier}, ${\sf Aut}(T_n)$ for $n=3,4,\ldots$ \cite[Corollary A.6.]{Valette} and for $n=\aleph_0$ \cite[Example 3.71]{Rosendal}, ${\sf Isom}(\Z\U)$, ${\sf Isom}(\Q\U)$, ${\sf Isom}(\U)$, and ${\sf Aff}(X)$ for an almost transitive Banach space with globally bounded linear isometry group \cite[Section 3.8]{Rosendal}.

\begin{restatable}{prop}{propertyPL}\label{prop:PL}
Suppose $A$ and $B$ are closed generating sets for an asymptotically spherical Polish group $G$ with property (PL). Then either $\rho_A$ is bounded, $\rho_B$ is bounded or, for any $\eta>\log4$, we have 
$$
\rho_A-C\;\leqslant\; \lambda\rho_B\;\leqslant\; \eta\cdot \rho_A+C
$$
for some constants $\lambda$ and $C$.
\end{restatable}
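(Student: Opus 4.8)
The plan is as follows. If either $\rho_A$ or $\rho_B$ is bounded we are already in one of the first two alternatives, so assume henceforth that both are unbounded. By property (PL) and \cite[Corollary 3.64]{Rosendal}, $\rho_A$ and $\rho_B$ are then each quasi-isometric to a maximal length function on $G$; since any two maximal length functions on $G$ are mutually quasi-isometric, we may fix one maximal length function $\ell$ with both $\rho_A$ and $\rho_B$ quasi-isometric to $\ell$. Asymptotic sphericity is brought to bear through the inverse to \cref{thm:main}: $\ell$ is asymptotically geodesic, i.e., for each $\eta_1>1$ there is a radius $R$ such that every $x\in G$ of sufficiently large $\ell$-length admits a factorization $x=v_1\cdots v_m$ with $\ell(v_i)<R$ and $\sum_{i}\ell(v_i)<\eta_1\,\ell(x)$.

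The core step is a comparison of $\rho_A$ with $\ell$, with control on the multiplicative constant, and symmetrically of $\rho_B$ with $\ell$. Since $\rho_A(a)\le1$ for $a\in A$, the set $A$ is coarsely bounded, say $A\subseteq B_\ell(s)$, which gives $\ell\le s\,\rho_A$ directly. For the reverse estimate, fix $\eta_1>1$, let $R$ be the corresponding geodecity radius, and use that $\rho_A$ is coarsely proper (being quasi-isometric to $\ell$) to choose $N$ with $B_\ell(2R)\subseteq A^{N}$, where $A^{N}$ denotes the set of products of at most $N$ elements of $A\cup A^{-1}$. Given $x$ with $\ell(x)$ large, take a factorization $x=v_1\cdots v_m$ as above and greedily amalgamate consecutive factors so that each amalgamated factor stays inside $B_\ell(2R)$; since each $\ell(v_i)<R$, every amalgamated factor other than the last has $\ell$-value at least $R$, so there are at most $1+\eta_1\ell(x)/R$ of them, each lying in $A^{N}$, and thus $\rho_A(x)\le N+(N\eta_1/R)\,\ell(x)$. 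The ensuing multiplicative gap between $\rho_A$ and $\ell$ is governed by the ratio of $N$ — essentially the $\rho_A$-diameter of $B_\ell(2R)$ — to $R/s$. The substance of the proposition is that, after optimizing the scales against $\eta_1\downarrow1$ and carrying out the analogous analysis for $B$, these two comparisons compose so as to pin $\rho_A$ and some scalar multiple $\lambda\rho_B$ together to within a multiplicative factor $\eta$ for every $\eta>\log 4$. Absorbing the additive terms arising from the factorizations into a single constant $C$ then yields $\rho_A-C\le\lambda\rho_B\le\eta\,\rho_A+C$.

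I expect the hard part to be exactly this quantitative bookkeeping, and in particular the extraction of the explicit constant $\log 4$. A plain quasi-isometry between a word length and a maximal length function carries an unspecified multiplicative constant, and beating it down requires using asymptotic geodecity — not mere quasi-geodesity — of $\ell$ together with the approximately linear growth of the "filling" functions $r\mapsto\sup\{\rho_A(y):\ell(y)<r\}$; a natural auxiliary device here is the maximalized length function $\ell_A(x)\defeq\inf\{\sum_i\ell(a_i):x=a_1\cdots a_k,\ a_i\in A\cup A^{-1}\}$, which is again maximal and is sandwiched as $\ell\le\ell_A\le s\,\rho_A$, so that asymptotic sphericity — applied now to $\ell_A$ and $\ell_B$ — can be fed back into the estimate. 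A genuine subtlety throughout is that $A$ and $B$ are only closed generating sets, not neighbourhoods of the identity, so the short $\ell$-geodesic segments supplied by asymptotic geodecity need not themselves belong to $A$ or $B$; this is why the comparison must be routed through inclusions of the form $B_\ell(r)\subseteq A^{N(r)}$ rather than performed directly, and it is the two such routings — one on the $A$-side, one on the $B$-side, each contributing a factor $2$ to the quantity under the logarithm — that together account for the bound $\log 4$.
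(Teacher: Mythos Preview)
Your overall architecture is right—reduce to the third alternative, use (PL) to make $\rho_A,\rho_B$ quasi-isometric to maximal length functions, then compare each to a maximal $\ell$ and invoke asymptotic sphericity—but the quantitative heart of the argument is missing, and your sketch does not produce the constant $\log 4$.

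Your direct comparison gives only $\alpha(\rho_A,\ell)\leqslant \log\big(N\eta_1 s/R\big)$, where $s={\sf rad}_\ell(A)$ and $N$ is chosen so that $B_\ell(2R)\subseteq A^N$. Nothing in your proposal forces $Ns/R$ near $2$; these numbers depend on $A$ and on $R$ in an uncontrolled way, so the assertion that ``each side contributes a factor $2$'' is unsubstantiated. The auxiliary $\ell_A(x)=\inf\big\{\sum_i\ell(a_i):x=a_1\cdots a_k,\ a_i\in A\cup A^{-1}\big\}$ does not rescue this: for a general closed bounded generating set $A$ (say a finite set in a non-discrete $G$) there is no $r>0$ with $B_\ell(r)\subseteq A$, so $\ell_A$ need not be a \emph{compatible} length function, and asymptotic sphericity cannot be applied to it.

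What the paper does instead is prove, for \emph{any} Polish group and any closed bounded generating set $A$, that there is a maximal length function $\ell$ with $\alpha(\rho_A,\ell)\leqslant\log 2$. The mechanism is: by Baire category and Pettis' theorem some power $C=A^{2p}$ is an identity neighbourhood; one then truncates a compatible maximal length function to $l'=\min\{l,r\}$ with $B_{l'}(r)\subseteq C$, so that $C\subseteq B_{l'}(R)$ for every $R>r$. Now \cref{lem:smoothed length functions} applies with $R/r$ arbitrarily close to $1$, and the only loss is the factor $\tfrac r2$ in that lemma, yielding $\alpha(\rho_C,l'_C)\leqslant\log 2$; since $\alpha(\rho_A,\rho_C)=0$, the same holds for $\rho_A$. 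Asymptotic sphericity then enters only once, to give $\alpha(\ell,\ell')=0$ between the two maximal length functions, and the triangle inequality finishes. In particular, asymptotic geodecity of $\ell$ is not used at all for the $\rho_A$-versus-$\ell$ comparison; your routing through the inverse of \cref{thm:main} is a detour that does not reach the needed constant.
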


Although we suspect that this holds for all $\eta>1$, we have no proof to this effect. Also, given the significant overlap between groups with property (PL) and asymptotically spherical groups, one may wonder if every Polish group with a maximal length function and property (PL) is also asymptotically spherical or at least has bounded eccentricity. The reverse implication does not hold, since $\Z$ is asymptotically spherical but  fails property (PL).


\section{A pseudometric on maximal length functions}
Our first task is to put a pseudometric on the class of maximal length functions on a (topological) group $G$ that measures the asymptotic distortion between the various induced notions of balls and spheres.

\begin{lemma}\label{lem:pseudometric}
For any two  unbounded maximal length functions $\ell_1$ and $\ell_2$ on a (topo\-logical)\footnote{To stress our point that abstract groups fall within the scope of our results when viewed as discrete topological groups, we put parentheses around ``topological.''} group $G$, we have
\mathes{
\limsup_{x\to \infty} \; \frac{\ell_1(x)}{\ell_2(x)} 
&=\inf \set{K \in \R\;|\; \text{for some } C \in \R, \; \ell_1 \leqslant K \ell_2 + C}\\
&= \bigg(\liminf_{x\to \infty} \; \frac{\ell_2(x)}{\ell_1(x)}\bigg)\inv.
}
\end{lemma}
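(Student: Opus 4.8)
The plan is to prove the two equalities by a chain of inequalities, treating the quantity $\limsup_{x\to\infty}\ell_1(x)/\ell_2(x)$ as the central object and squeezing it from both sides. Write $L=\limsup_{x\to\infty}\ell_1(x)/\ell_2(x)$ and $I=\inf\{K\in\R \mid \ell_1\leqslant K\ell_2+C \text{ for some } C\}$. Since $\ell_1$ and $\ell_2$ are both maximal, they are quasi-isometric, so in particular $\ell_1\leqslant K_0\ell_2+C_0$ for some constants; hence the infimum defining $I$ is over a nonempty set and is finite, and likewise $\ell_2\leqslant K_0'\ell_1 + C_0'$, which together with $\ell_2$ unbounded forces $\ell_1$ unbounded and keeps all the displayed ratios bounded away from $0$ and $\infty$ in the limsup/liminf sense. (One should also note $L>0$: since $\ell_2\leqslant K_0'\ell_1+C_0'$ and $\ell_2\to\infty$, for large $x$ we get $\ell_1(x)/\ell_2(x)\geqslant (1-C_0'/\ell_2(x))/K_0' \to 1/K_0'$.)

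First I would show $L\leqslant I$. If $\ell_1\leqslant K\ell_2+C$, then $\ell_1(x)/\ell_2(x)\leqslant K + C/\ell_2(x)$, and since $\ell_2(x)\to\infty$ as $x\to\infty$, taking $\limsup$ gives $L\leqslant K$; as this holds for every admissible $K$, we get $L\leqslant I$. Next, the reverse inequality $I\leqslant L$: fix any $\e>0$. By definition of $\limsup$, there is $R$ so that $\ell_2(x)\geqslant R$ implies $\ell_1(x)\leqslant (L+\e)\ell_2(x)$; for the finitely-many-up-to-... — rather, for the set $\{x : \ell_2(x)<R\}$, which is $\ell_2$-bounded, $\ell_1$ is also bounded on it (using $\ell_1\leqslant K_0\ell_2+C_0\leqslant K_0 R + C_0$ there). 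Hence $\ell_1\leqslant (L+\e)\ell_2 + C_\e$ globally for a suitable constant $C_\e$, so $I\leqslant L+\e$; letting $\e\to 0$ gives $I\leqslant L$. This establishes the first equality $L=I$.

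For the second equality, I would apply the first equality with the roles of $\ell_1$ and $\ell_2$ interchanged, which gives
\[
\limsup_{x\to\infty}\frac{\ell_2(x)}{\ell_1(x)} \;=\; \inf\set{K\in\R \mid \ell_2\leqslant K\ell_1+C \text{ for some } C}.
\]
Then it remains to relate $\limsup \ell_2/\ell_1$ to $\liminf \ell_1/\ell_2$ and to check that $\inf\{K \mid \ell_1\leqslant K\ell_2+C\}$ and $\inf\{K\mid \ell_2\leqslant K\ell_1+C\}$ are reciprocal. The elementary fact $\limsup_{x\to\infty} f(x)^{-1} = (\liminf_{x\to\infty} f(x))^{-1}$ for a positive function $f$ bounded away from $0$ and $\infty$ (here $f=\ell_1/\ell_2$, using the two-sided quasi-isometry bound and unboundedness to keep $f$ in a compact subinterval of $(0,\infty)$ eventually) gives $\liminf \ell_1/\ell_2 = (\limsup \ell_2/\ell_1)^{-1}$. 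Combining, $(\liminf_{x\to\infty}\ell_2(x)/\ell_1(x))^{-1}$... — more directly, from $L = \limsup\ell_1/\ell_2$ and the interchanged version we read off $\big(\liminf_{x\to\infty}\ell_2(x)/\ell_1(x)\big)^{\,?}$; I will instead just record: $\big(\liminf_{x\to\infty}\ell_2(x)/\ell_1(x)\big)\inv = \limsup_{x\to\infty}\ell_1(x)/\ell_2(x) = L$, which is exactly the third expression in the statement. The main thing to be careful about — the only real "obstacle" — is the bookkeeping with additive constants: making sure that "$\ell_2$ is bounded on $\ell_2$-bounded sets" type arguments are used to absorb the behavior of $\ell_1$ on bounded regions into the additive constant $C$, and that the limsup/liminf reciprocal identity is applied only after one has guaranteed (via maximality $\Rightarrow$ two-sided quasi-isometry, plus unboundedness) that the ratio $\ell_1/\ell_2$ is eventually confined to an interval $[1/K_0', K_0]$ bounded away from $0$ and $\infty$, so that no $0/\infty$ pathologies arise.
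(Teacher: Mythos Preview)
Your proof is correct and follows essentially the same approach as the paper's: both establish $L=I$ by showing that any $K>L$ admits an additive constant $C$ with $\ell_1\leqslant K\ell_2+C$ (your $I\leqslant L$ step) and that no $K<L$ can (your $L\leqslant I$ step, which the paper proves via the contrapositive), and both obtain the third expression from the elementary identity $\big(\liminf f\big)^{-1}=\limsup f^{-1}$ applied to $f=\ell_2/\ell_1$, justified by the two-sided quasi-isometry bound keeping the ratio in a compact subinterval of $(0,\infty)$. The only differences are cosmetic ordering and that the paper conditions on $\ell_1(x)\geqslant C$ rather than $\ell_2(x)\geqslant R$ when unpacking the $\limsup$, which is immaterial since the two length functions are quasi-isometric.
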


\begin{proof}
Because both $\ell_1$ and $\ell_2$ are maximal, we have that $\tfrac 1K\ell_1-C\leqslant \ell_2\leqslant K\ell_1+C$ for some appropriate constants $K$ and $C$ and hence that
$$
0\;< \;\limsup_{x\to \infty} \; \frac{\ell_1(x)}{\ell_2(x)}\;< \; \infty.
$$
It follows from this that 
$$
\bigg(\liminf_{x\to \infty} \; \frac{\ell_2(x)}{\ell_1(x)}\bigg)\inv\;=\;\limsup_{x\to \infty} \; \frac{\ell_1(x)}{\ell_2(x)} .
$$

Suppose now that $K > \limsup_{x\to \infty} \; \frac{\ell_1(x)}{\ell_2(x)}$.     Then there is some $C \in \R$ such that for all $x \in G$ with $\ell_1(x) \geqslant C$, we have 
$$
\frac{\ell_1(x)}{\ell_2(x)} < K,
$$
that is,  $\ell_1(x) < K \ell_2(x)$. It follows that, for all $x \in G$, either 
$$
\ell_1(x) < C \leqslant K\ell_2(x) + C,
$$
or
$$
\ell_1(x) < K \ell_2(x) \leqslant K \ell_2(x) + C.
$$

Assume instead that $K < \limsup_{x\to \infty} \; \frac{\ell_1(x)}{\ell_2(x)}$  and fix some  $\epsilon > 0$ so that $K + \epsilon < \limsup_{x\to \infty} \; \frac{\ell_1(x)}{\ell_2(x)}$. Then we may find a sequence of elements  $x_n \in G$ satisfying
$$
\ell_2(x_n) > \frac{n}{\epsilon}
\qquad\&\qquad 
\frac{\ell_1(x_n)}{\ell_2(x_n)} > K + \epsilon,
$$
whereby
$$
\ell_1(x_n) > (K + \epsilon)\ell_2(x_n) > K \ell_2(x_n) + n.
$$
Hence, for all $n \in \N$, $\ell_1 \not\leqslant K \ell_2 + n$.
\end{proof}

\begin{defn}
Let $G$ be a (topological) group admitting unbounded maximal length functions. We define $\alpha(\ell_1,\ell_2)$ as follows. 
\mathes{
\alpha(\ell_1, \ell_2) 
&\defeq 
\log \,\limsup_{x\to \infty} \; \frac{\ell_1(x)}{\ell_2(x)}\;\;+\;\;\log \,\limsup_{x\to \infty} \; \frac{\ell_2(x)}{\ell_1(x)}
\\
&=
\log \,\limsup_{x\to \infty} \; \frac{\ell_1(x)}{\ell_2(x)}\;\;-\;\;\log\,\liminf_{x\to \infty} \; \frac{\ell_1(x)}{\ell_2(x)}.
}   
\end{defn}

\begin{prop}\label{prop:pseudometric}
Let $G$ be a (topological) group admitting unbounded maximal length functions. Then 
$\alpha$
defines a pseudometric on the collection of all maximal length functions on $G$. In particular,
$$
\alpha(\ell_1,\ell_2)=0 \quad\text{if and only if}\quad  \text{the limit }\lim_{x\to \infty}\frac{\ell_1(x)}{\ell_2(x)} \text{ exists}.
$$
\end{prop}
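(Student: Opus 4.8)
The plan is to verify the four defining properties of a pseudometric directly from the definition of $\alpha$, leaning on Lemma~\ref{lem:pseudometric} to guarantee that every $\limsup$ and $\liminf$ appearing is finite and strictly positive, so that the logarithms are always defined and finite. First I would note that the two displayed expressions for $\alpha$ agree precisely because Lemma~\ref{lem:pseudometric} gives $\limsup_{x\to\infty}\ell_2(x)/\ell_1(x)=\big(\liminf_{x\to\infty}\ell_1(x)/\ell_2(x)\big)^{-1}$, hence $\log\limsup_{x\to\infty}\ell_2(x)/\ell_1(x)=-\log\liminf_{x\to\infty}\ell_1(x)/\ell_2(x)$; I would then use whichever of the two forms is convenient at each point.

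Symmetry $\alpha(\ell_1,\ell_2)=\alpha(\ell_2,\ell_1)$ is immediate from the first form, $\alpha(\ell_1,\ell_1)=0$ holds because the ratio is constantly $1$, and $\alpha(\ell_1,\ell_2)\geqslant 0$ holds because $\limsup_{x\to\infty}\ell_1(x)/\ell_2(x)\geqslant\liminf_{x\to\infty}\ell_1(x)/\ell_2(x)$ and $\log$ is increasing. The one substantive point is the triangle inequality. For this I would factor $\ell_1/\ell_3=(\ell_1/\ell_2)\cdot(\ell_2/\ell_3)$ and invoke the elementary inequalities $\limsup_{x\to\infty}\ell_1(x)/\ell_3(x)\leqslant\big(\limsup_{x\to\infty}\ell_1(x)/\ell_2(x)\big)\big(\limsup_{x\to\infty}\ell_2(x)/\ell_3(x)\big)$ and, dually, $\liminf_{x\to\infty}\ell_1(x)/\ell_3(x)\geqslant\big(\liminf_{x\to\infty}\ell_1(x)/\ell_2(x)\big)\big(\liminf_{x\to\infty}\ell_2(x)/\ell_3(x)\big)$, which are valid since all six factors are positive and finite by Lemma~\ref{lem:pseudometric} and since $\ell_2(x),\ell_3(x)\to\infty$ as $x\to\infty$, so the ratios are defined near infinity. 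Taking logarithms, subtracting the second line from the first, and regrouping the four resulting terms yields exactly $\alpha(\ell_1,\ell_3)\leqslant\alpha(\ell_1,\ell_2)+\alpha(\ell_2,\ell_3)$.

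For the final clause, since $\alpha(\ell_1,\ell_2)=\log\limsup_{x\to\infty}\ell_1(x)/\ell_2(x)-\log\liminf_{x\to\infty}\ell_1(x)/\ell_2(x)$ with both logarithms finite, $\alpha(\ell_1,\ell_2)=0$ is equivalent to $\limsup_{x\to\infty}\ell_1(x)/\ell_2(x)=\liminf_{x\to\infty}\ell_1(x)/\ell_2(x)$, i.e.\ to the existence of $\lim_{x\to\infty}\ell_1(x)/\ell_2(x)$, which is then automatically a finite positive number by Lemma~\ref{lem:pseudometric}. I do not expect a genuine obstacle here; the only step needing a word of justification is the sub/supermultiplicativity of $\limsup$ and $\liminf$, which reduces to $\sup_{\ell(x)\geqslant r}(fg)(x)\leqslant\big(\sup_{\ell(x)\geqslant r}f(x)\big)\big(\sup_{\ell(x)\geqslant r}g(x)\big)$ for positive $f,g$ together with the fact that the infimum over $r$ of a product of two decreasing families of positive reals equals the product of the infima. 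All the real content of the proposition already resides in Lemma~\ref{lem:pseudometric}.
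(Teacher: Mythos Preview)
Your argument is correct and follows essentially the same route as the paper: factor $\ell_1/\ell_3=(\ell_1/\ell_2)(\ell_2/\ell_3)$, use submultiplicativity of $\limsup$ (the paper applies this to both $\ell_1/\ell_3$ and $\ell_3/\ell_1$ rather than pairing it with the $\liminf$ inequality, but via Lemma~\ref{lem:pseudometric} these are equivalent), take logarithms and regroup, then read off nonnegativity and the final clause from the $\limsup-\liminf$ form.
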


\begin{proof}
Note first that, if $\ell_1$, $\ell_2$ and $\ell_3$ are maximal length functions on $G$, then 
\mathes{
\limsup_{x\to \infty} \; \frac{\ell_1(x)}{\ell_3(x)}
&= \limsup_{x\to \infty} \; \frac{\ell_1(x)}{\ell_2(x)}\cdot \frac{\ell_2(x)}{\ell_3(x)}\\
&\leqslant \limsup_{x\to \infty} \; \frac{\ell_1(x)}{\ell_2(x)}\cdot  \limsup_{x\to \infty}\;\frac{\ell_2(x)}{\ell_3(x)}.\\
}
Thus, for all $\ell_1$, $\ell_2$ and $\ell_3$, we have
\mathes{
\alpha(\ell_1,\ell_3)
&=\log \,\limsup_{x\to \infty} \; \frac{\ell_1(x)}{\ell_3(x)}\;\;+\;\;\log \,\limsup_{x\to \infty} \; \frac{\ell_3(x)}{\ell_1(x)}
\\
&\leqslant \log\bigg(\limsup_{x\to \infty} \; \frac{\ell_1(x)}{\ell_2(x)}\cdot \limsup_{x\to \infty} \; \frac{\ell_2(x)}{\ell_3(x)}\bigg)\;\;+\;\;\log\bigg(\limsup_{x\to \infty} \; \frac{\ell_3(x)}{\ell_2(x)}\cdot \limsup_{x\to \infty} \; \frac{\ell_2(x)}{\ell_1(x)}\bigg)\\
&=
\log \limsup_{x\to \infty} \; \frac{\ell_1(x)}{\ell_2(x)}
+
\log \limsup_{x\to \infty} \; \frac{\ell_2(x)}{\ell_3(x)}
+
\log \limsup_{x\to \infty} \; \frac{\ell_3(x)}{\ell_2(x)}
+
\log \limsup_{x\to \infty} \; \frac{\ell_2(x)}{\ell_1(x)}\\
&=\alpha(\ell_1,\ell_2)+\alpha(\ell_2,\ell_3).\\
}
So $\alpha$ satisfies the triangle inequality.

Observe also that 
\mathes{
\alpha(\ell_1, \ell_2) \;=\;
\log \,\limsup_{x\to \infty} \; \frac{\ell_1(x)}{\ell_2(x)}\;\;-\;\;\log\,\liminf_{x\to \infty} \; \frac{\ell_1(x)}{\ell_2(x)}\;\geqslant \; 0,
}
so $\alpha$ is a pseudometric. The same expression also shows that $\alpha(\ell_1,\ell_2)=0$ if and only if the limit $\lim_{x\to \infty}\frac{\ell_1(x)}{\ell_2(x)}$ exists.
\end{proof}

In the light of Lemma \ref{lem:pseudometric} and Proposition \ref{prop:pseudometric}, we may thus give the following alternative definitions of asymptotic sphericity and bounded eccentricity. 
\begin{defn}
Let $G$ be a (topological) group admitting unbounded maximal length functions. We say that $G$ has {\em bounded eccentricity} if and only if the collection of maximal length functions on $G$ has finite $\alpha$-diameter and that $G$ is {\em asymptotically spherical} in case it has $\alpha$-diameter $0$.
\end{defn}


\section{A criterion for asymptotic sphericity}\label{sec:spherical}

We now proceed to establish our main result, Theorem \ref{thm:main}, namely sufficient conditions for a group to be asymptotically spherical.

\main*

\begin{proof}
Fix some point $a\in X$ as in the statement of the theorem and define a pseudolength function $\tau$ by 
$$
\tau(g)=d(g(a),a).
$$
We will show that $\alpha(\ell,\tau)=0$ for any maximal length function $\ell$ on $G$, which thus proves that $G$ is asymptotically spherical.

To see this, suppose that a maximal length function $\ell$ and a number $\eta>1$ are given. We first choose some $R$ such that 
$$
\inf\bigg(\sum_{i=1}^nd(z_{i-1},z_{i})\;\bigg|\; z_0=x,\; z_n=y\;\&\; d(z_{i-1},z_i)<R\bigg)\;<\;\eta\cdot d(x,y)
$$
for all $x,y\in X$ and let
$$
V=\{h\in G\;|\; \tau(h)<2\sigma(R)\}.
$$
Set 
$$
\lambda=\limsup_{g\to \infty}\frac {\tau(g)}{\ell(g)},
$$
whereby  
$$
\tfrac 1\eta\cdot \tau-K<\lambda\ell
$$ 
for some constant $K$, whereas, for all $r$, there is some $g_r\in G$ with $\eta\tau(g_r)>\lambda\ell(g_r)+r$. 
Choose $r>{2\sigma(R)\eta}$ large enough so that
$$
{\sf rad}_\ell(V):=\sup\{\ell(f)\;|\; f\in V\}<\frac r{\lambda}
$$
and let $g=g_r$. Thus, 
$$
\ell(g)+{\sf rad}_\ell(V)<\frac{\eta\cdot\tau(g)}\lambda.
$$ 

Suppose now that $f\in G$ is an arbitrary element. Then there are $z_0,z_1,\ldots, z_n\in X$ with $z_0=a$, $z_n=f(a)$ and $d(z_{i-1},z_i)<R$ so that 
$$
\sum_{i=1}^nd(z_{i-1},z_{i})\;\leqslant\;\eta\cdot d(f(a),a)\;=\;\eta\cdot \tau(f).
$$
This means that there is a subsequence $w_0,w_1,\ldots, w_m$ of the $z_i$ with $w_0=z_0=a$ and $w_m=z_n=f(a)$ so that
$$
\tau(g) \;<\;d(w_{i-1},w_i)\;<\;\tau (g)+R
$$
for all $i=1,2,\ldots, m-1$, whereas $d(w_{m-1},w_m)<\tau(g)+R$. It follows that
$$
(m-1)\cdot \tau(g)\;\leqslant \;\sum_{i=1}^md(w_{i-1},w_{i})\;\leqslant \;\sum_{i=1}^nd(z_{i-1},z_{i})\;\leqslant\;\eta\cdot \tau(f)
$$
and so 
$$
m-1\leqslant\frac{\eta\cdot\tau(f)}{\tau(g)}.
$$ 
Also, by the assumptions of the theorem, we may find $h_1,\ldots, h_{m-1}$ so that
\mathes{
d_{\sf Hausdorff}\big(\{h_i(w_{i-1}),h_i(w_i)\},\{g(a),a\}\big)
&<\sigma\big( d(w_{i-1},w_i)-d(g(a),a)\big)\\
&=\sigma\big(d(w_{i-1},w_i)-\tau(g)\big)\\
&\leqslant\sigma(R)
}
for all $i=1,\ldots,m-1$.

\begin{claim}
For all $j=0,\ldots, m-1$, we have ${\sf dist}\big(w_j, \big(V\{g,g\inv\}\big)^j\cdot a\big)<\sigma(R)$.
\end{claim}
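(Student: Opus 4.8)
The plan is to prove the claim by induction on $j$. The base case $j=0$ is immediate, since $\big(V\{g,g\inv\}\big)^0\cdot a=\{a\}=\{w_0\}$. For the inductive step, fix $j$ with $1\leqslant j\leqslant m-1$ and use the inductive hypothesis to choose $u\in\big(V\{g,g\inv\}\big)^{j-1}$ with $d\big(w_{j-1},u(a)\big)<\sigma(R)$. It then suffices to produce an element $v\in V$ and a sign $\epsilon\in\{+1,-1\}$ with $d\big(w_j,(uvg^\epsilon)(a)\big)<\sigma(R)$, because then $uvg^\epsilon\in\big(V\{g,g\inv\}\big)^{j-1}\cdot V\{g,g\inv\}=\big(V\{g,g\inv\}\big)^{j}$, and hence ${\sf dist}\big(w_j,\big(V\{g,g\inv\}\big)^{j}\cdot a\big)<\sigma(R)$.

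Recall that the element $h_j$ constructed above satisfies $d_{\sf Hausdorff}\big(\{h_j(w_{j-1}),h_j(w_j)\},\{a,g(a)\}\big)<\sigma(R)$. The first step is to upgrade this Hausdorff estimate to an honest matching of the two two-point sets. Since the action is isometric and $1\leqslant j\leqslant m-1$, we have $d\big(h_j(w_{j-1}),h_j(w_j)\big)=d(w_{j-1},w_j)>\tau(g)$, and moreover $\tau(g)=d\big(a,g(a)\big)>2\sigma(R)$ by the choice $r>2\sigma(R)\eta$ together with $\eta\tau(g)>\lambda\ell(g)+r\geqslant r$. Hence neither $\{a,g(a)\}$ nor $\{h_j(w_{j-1}),h_j(w_j)\}$ can have both of its points within $\sigma(R)$ of a single point of the other set, so the Hausdorff bound forces exactly one of the two alternatives: \textbf{(a)} $d\big(h_j(w_{j-1}),a\big)<\sigma(R)$ and $d\big(h_j(w_j),g(a)\big)<\sigma(R)$; or \textbf{(b)} $d\big(h_j(w_{j-1}),g(a)\big)<\sigma(R)$ and $d\big(h_j(w_j),a\big)<\sigma(R)$.

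In alternative \textbf{(a)} I would take $v=u\inv h_j\inv$ and $\epsilon=+1$; in alternative \textbf{(b)}, $v=u\inv h_j\inv g$ and $\epsilon=-1$ (in each case $v$ is designed to move $a$ by less than $2\sigma(R)$, hence to lie in $V$). Write $p,q\in\{a,g(a)\}$ for the points within $\sigma(R)$ of $h_j(w_{j-1})$ and of $h_j(w_j)$ respectively, so that $\{p,q\}=\{a,g(a)\}$ by the dichotomy above. A direct computation gives $v(a)=u\inv\big(h_j\inv(p)\big)$ and $(uvg^\epsilon)(a)=h_j\inv(q)$ in both alternatives. Since $G$ acts by isometries, $\tau(v)=d\big(v(a),a\big)=d\big(h_j\inv(p),u(a)\big)$, which by the triangle inequality and a further use of isometry is at most $d\big(p,h_j(w_{j-1})\big)+d\big(w_{j-1},u(a)\big)<2\sigma(R)$; thus $v\in V$. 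Likewise $d\big(w_j,(uvg^\epsilon)(a)\big)=d\big(h_j(w_j),q\big)<\sigma(R)$, which is exactly what was needed to complete the induction.

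The only genuinely delicate point is the dichotomy of the second paragraph: it is precisely the fact that $g$ was chosen large, i.e.\ $\tau(g)>2\sigma(R)$, that excludes the degenerate situation in which $h_j(w_{j-1})$ and $h_j(w_j)$ both sit within $\sigma(R)$ of $a$ (or both within $\sigma(R)$ of $g(a)$), and this is what lets us read off at each stage whether the new right factor should be $g$ or $g\inv$. Everything else---the two identities for $v(a)$ and $(uvg^\epsilon)(a)$, the bound on $\tau(v)$, and the membership $v\in V$---is routine manipulation, using only that $G$ acts on $X$ by isometries so that the basepoint $a$ may be moved around freely.
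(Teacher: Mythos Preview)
Your proof is correct and follows essentially the same route as the paper's: induction on $j$, the dichotomy coming from $\tau(g)>2\sigma(R)$, and in each alternative exhibiting an element of $(V\{g,g^{-1}\})^{j}$ sending $a$ within $\sigma(R)$ of $w_j$. Apart from a harmless index shift (you pass from $j-1$ to $j$ while the paper passes from $j$ to $j+1$) and your slightly more explicit presentation of the new element as $uvg^{\epsilon}$, the arguments coincide; in the paper's notation your element $uvg^{\epsilon}$ is precisely $h_{j}^{-1}g$ in case~(a) and $h_{j}^{-1}$ in case~(b).
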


\begin{proof}[Proof of claim]
The proof of the claim is by induction on $j$. The case $j=0$ is trivial, because $w_0=a$. So suppose instead by induction that $d(w_j, h(a))<\sigma(R)$ for some $h\in \big(V\{g,g\inv\}\big)^j$. Because 
$$
d(g(a),a)\;=\;\tau(g)\;=\;\tau(g_r)\;>\;\frac r\eta\;>\;2\sigma(R),
$$
we have by the choice of $h_{j+1}$ that either
$$
d(h_{j+1}(w_j),a)<\sigma(R) \quad\&\quad d(h_{j+1}(w_{j+1}),g(a))<\sigma(R) 
$$
or that
$$
d(h_{j+1}(w_j),g(a))<\sigma(R) \quad\&\quad d(h_{j+1}(w_{j+1}),a)<\sigma(R).
$$
In the first case, we see that 
\mathes{
\tau(h_{j+1}h)
&=d(h_{j+1}h(a), a)\\
&\leqslant d(h_{j+1}h(a),h_{j+1}(w_j))+d(h_{j+1}(w_j) ,a)\\
&= d(h(a),(w_j))+d(h_{j+1}(w_j) ,a)\\
&<2\sigma(R),
}
whereby $h_{j+1}h\in V$ and $h_{j+1}\inv g\in hVg\subseteq  \big(V\{g,g\inv\}\big)^{j+1}$. Thus
\mathes{
{\sf dist}\big(w_{j+1}, \big(V\{g,g\inv\}\big)^{j+1}\cdot a\big)
\;\leqslant\; d\big(w_{j+1}, h_{j+1}\inv g (a)\big)\;=\;d(h_{j+1}(w_{j+1}),g(a))\;<\;\sigma(R).
}
In the second case, 
\mathes{
\tau(g\inv h_{j+1}h)
&=d(g\inv h_{j+1}h(a),a)\\
&=d(h_{j+1}h(a),g( a))\\
&\leqslant d(h_{j+1}h(a),h_{j+1}(w_j))+d(h_{j+1}(w_j) ,g(a))\\
&= d(h(a),w_j)+d(h_{j+1}(w_j) ,g(a))\\
&<2\sigma(R),
} 
so $g\inv h_{j+1}h\in V$, i.e., $h_{j+1}\inv \in hVg\inv\subseteq  \big(V\{g,g\inv\}\big)^{j+1}$. It follows again that
\mathes{
{\sf dist}\big(w_{j+1}, \big(V\{g,g\inv\}\big)^{j+1}\cdot a\big)
\;\leqslant\; d\big(w_{j+1}, h_{j+1}\inv  (a)\big)\;=\;d(h_{j+1}(w_{j+1}),a)\;<\;\sigma(R),
}
which proves our claim.
\end{proof}

By our claim we now see that there is some $h\in \big(V\{g,g\inv\}\big)^{m-1}$ so that $d(w_{m-1},h(a))<\sigma(R)$ and hence so that
$$
\tau(h\inv f)=d(f(a),h(a))=d(w_m,h(a))\leqslant d(w_m, w_{m-1})+d(w_{m-1},h(a))<\tau(g)+R+\sigma(R).
$$
It thus follows that
\mathes{
\lambda\ell(f)
&\leqslant \lambda\ell(h)+\lambda\ell(h\inv f)\\
&\leqslant \lambda(m-1)\Big(\ell(g)+{\sf rad}_\ell(V)\Big)+\sup\{\lambda\ell(q)\;|\; \tau(q)<\tau(g)+R+\sigma(R)\}\\
&<  \lambda\cdot \frac{\eta\cdot\tau(f)}{\tau(g)}\cdot \frac{\eta\cdot\tau(g)}\lambda+\sup\{\lambda\ell(q)\;|\; \tau(q)<\tau(g)+R+\sigma(R)\}\\
&= {\eta^2\cdot\tau(f)}+\sup\{\lambda\ell(q)\;|\; \tau(q)<\tau(g)+R+\sigma(R)\}.
}
Set $K'=\max\Big\{K, \sup\{\lambda\ell(q)\;|\; \tau(q)<\tau(g)+R+\sigma(R)\} \Big\}$. Because $f\in G$ was arbitrary, we thus see that
$$
\tfrac1\eta\cdot\tau-K'\;\;<\;\;\lambda\ell\;\;<\;\;{\eta^2}\cdot\tau+K'.
$$
Again, as $\eta>1$ is arbitrary, we find that $\alpha(\ell,\tau)=\alpha(\lambda\ell, \tau)=0$.
\end{proof}

We may now combine Theorem \ref{thm:main} with the general version of the Milnor--Schwarz lemma \cite[Theorem 2.77]{Rosendal} to get a more immediately applicable result. For this we need a few additional concepts. First of all, a subset $B$ of a metrisable topological group $G$ is said to be {\em bounded} in case 
$$
{\sf rad}_\ell(B)<\infty
$$
for every compatible length function $\ell$ on $G$. In case $G$ admits a maximal length function $\ell$, it is evidently enough to check that ${\sf rad}_\ell(B)<\infty$ for this specific $\ell$. Also, an isometric action of a group $G$ on a metric space $X$ is said to be {\em cobounded} provided that, for any $x\in X$, 
$$
\sup_{y\in X}\inf_{g\in G}d(y,gx)<\infty.
$$
And finally, the action is {\em coarsely proper} provided that, for all subsets $B\subseteq G$ and $x\in X$,
$$
B \text{ is bounded } \quad\Longleftrightarrow \quad {\sf diam}(B\cdot x)<\infty.
$$

\begin{cor}\label{cor:coarsely proper}
Suppose $G\curvearrowright X$ is a coarsely proper, cobounded, continuous, isometric action of a metrisable topological group on a metric space. Assume also that 
\mathes{
\a \eta>1\;\;\exists R\;\; \a x,y\in X\;\;\inf\bigg(\sum_{i=1}^nd(z_{i-1},z_{i})\;\bigg|\; z_0=x,\; z_n=y\;\&\; d(z_{i-1},z_i)<R\bigg)<\eta\cdot d(x,y)
}
and that, for some increasing function $\sigma\colon \R_+\to \R_+$ and all $x,y,z,u\in  X$ with $d(x,y)\leqslant d(z,u)$, we have 
$$
\inf_{h\in G}d_{\sf Hausdorff}\big(\{h(z),h(u)\},\{x,y\}\big)< \sigma\big(d(z,u)-d(x,y)\big).
$$
Then $G$ is asymptotically spherical.
\end{cor}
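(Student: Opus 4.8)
The plan is to derive Corollary~\ref{cor:coarsely proper} from Theorem~\ref{thm:main} by using the general Milnor--Schwarz lemma \cite[Theorem 2.77]{Rosendal} to supply the one hypothesis of Theorem~\ref{thm:main} that is not already an explicit assumption of the corollary, namely that $G$ admits a maximal length function and that the orbital map $g\mapsto ga$ is a quasi-isometric embedding. The two remaining hypotheses of Theorem~\ref{thm:main} --- the asymptotic geodecity of $X$ and the $\sigma$-almost-transitivity on pairs at prescribed distance --- are assumed verbatim in the corollary, so once the Milnor--Schwarz input is in place the corollary is an immediate application of Theorem~\ref{thm:main}.

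To run the argument I would first observe that the asymptotic geodecity assumption, applied with a single value such as $\eta=2$, already exhibits $(X,d)$ as large-scale geodesic in the sense needed to invoke the Milnor--Schwarz lemma: any $x,y\in X$ with $d(x,y)\geqslant R$ are joined by an $R$-chain of total length $<2\,d(x,y)$, while for $d(x,y)<R$ the two-point chain $x,y$ itself works. Since the action $G\curvearrowright X$ is continuous, coarsely proper, cobounded and isometric, \cite[Theorem 2.77]{Rosendal} then gives that $G$ is generated by a coarsely bounded set --- equivalently, that $G$ admits a maximal length function --- and that for any $a\in X$ the orbital map $g\mapsto ga$ is a quasi-isometry of $G$ (with respect to the left-invariant metric of a maximal length function) onto $X$, in particular a quasi-isometric embedding. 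If $X$ is bounded, coarse properness forces $G$ to be bounded and there is nothing to prove, so we may assume $X$, and hence $G$, unbounded, so that the maximal length functions of $G$ are unbounded and the assertion ``$G$ is asymptotically spherical'' is meaningful. With this, all hypotheses of Theorem~\ref{thm:main} are satisfied, and that theorem yields the conclusion.

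The only step carrying genuine content is the appeal to the Milnor--Schwarz lemma, and the point I would be careful about is that our asymptotic geodecity condition is at least as strong as the coarse-geodecity hypothesis under which \cite[Theorem 2.77]{Rosendal} is stated, and that in that geodesic setting coarse properness together with coboundedness really do upgrade the orbital map from a coarse equivalence to a genuine quasi-isometry (with linear, not merely coarse, control). Granting this, the corollary is a direct substitution of the hypotheses into Theorem~\ref{thm:main}.
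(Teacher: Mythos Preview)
Your proposal is correct and follows essentially the same route as the paper: one checks that the asymptotic geodecity assumption makes $X$ large-scale geodesic, invokes the Milnor--Schwarz lemma \cite[Theorem 2.77]{Rosendal} to obtain that the orbital map $g\mapsto ga$ is a quasi-isometric embedding, and then applies Theorem~\ref{thm:main}. The paper's proof is terser, but your additional remarks (the explicit $\eta=2$ chain argument and the handling of the bounded case) are harmless elaborations rather than a different approach.
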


\begin{proof}
The metric space $X$ is evidently large-scale geodesic in the sense of \cite[Definition 2.62]{Rosendal}. Therefore, by \cite[Theorem 2.77]{Rosendal} the orbital map
$$
g\in G\mapsto gx\in X
$$
is a quasi-isometric embedding.
\end{proof}

All cases of Corollary \ref{cor:main} are now all special instances of Corollary \ref{cor:coarsely proper}. Indeed, for $n\geqslant 1$, the groups ${\sf O}(n)\ltimes \R^n$  and ${\sf SO}(n)\ltimes \R^n$ consist of all isometries, respectively, orientation-preserving isometries of $\R^n$. Their tautological actions on $\R^n$ are clearly transitive and proper, whereby they are also coarsely proper. Furthermore, the $2$-transitivity condition is evident. Similarly, for $n\geqslant 2$, the tautological action of  ${\sf Aut}(T_n)$ on the metric space $T_n$ is proper and transitive. The case of ${\sf Aut}(T_\infty)$ similarly follows from \cite[Example 3.6]{Rosendal}, whereas ${\sf Isom}(\Z\mathbb U)$, ${\sf Isom}(\Q\mathbb U)$ and ${\sf Isom}(\mathbb U)$ follow from \cite[Example 6.36 and Theorem 3.10]{Rosendal}, respectively. Finally, observe that $\Z$ can be seen as the group of orientation-preserving isometries of the bi-infinite line $T_2$, whereas 
the infinite dihedral group $D_\infty$ is just ${\sf Aut}(T_2)$ and $\R$ is isomorphic to ${\sf O(1)}\ltimes \R$.

For the case of Banach spaces, note that, by \cite[Proposition 3.17]{Rosendal}, if ${\sf Isom}(X)$ is globally bounded, then the orbital map $g\in {\sf Aff}(X)\mapsto g\cdot 0\in X$ is a quasi-isometric embedding. Also, if $X$ is {\em almost transitive}, meaning that the group ${\sf Isom}(X)$ of linear isometries of $X$ induces dense orbits on the unit sphere $S_X$, then the $2$-transitivity condition of Corollary \ref{cor:coarsely proper}  for the point-transitive action ${\sf Aff}(X)\curvearrowright X$ is easily satisfied. Indeed, in this case, for any points $x,y,z,u\in X$ satisfying $0<\norm{y-x}\leqslant \norm{u-z}$ and any $\epsilon>0$, one may may find a linear isometry $T\in {\sf Isom}(X)$ so that 
$$
\NORMM{T\Big(\frac {u-z}{\norm{u-z}}\Big)-\frac {y-x}{\norm{y-x}}}<\frac\epsilon{\norm{u-z}},
$$
whereby $h=T(\,\cdot\, -z)+x$ defines an affine isometry of $X$ satisfying $h(z)=x$ and 
\mathes{
\norm{h(u)-y}&=
\norm{T(u-z)+x-y}\\
&= \norm{u-z}\NORMM{T\Big(\frac {u-z}{\norm{u-z}}\Big)-\frac {y-x}{\norm{u-z}}}\\
&< \norm{u-z}\NORMM{\frac {y-x}{\norm{y-x}}-\frac {y-x}{\norm{u-z}}} +\epsilon\\
&= \NORMM{\big(\norm{u-z}-\norm{y-x}\big)\frac {y-x}{\norm{y-x}}} +\epsilon\\
&=  \big|\norm{u-z}-\norm{y-x}\big|+\epsilon.
}
Thus, $d_{\sf Hausdorff}\big(\{h(z),h(u)\},\{x,y\}\big)<\big|\norm{u-z}-\norm{y-x}\big|+\epsilon$. On the other hand, when $\norm{y-x}=0$, i.e., $x=y$, then one may simple let $h$ be the translation by $x-z$, whence $d_{\sf Hausdorff}\big(\{h(z),h(u)\},\{x,y\}\big)=\norm{u-z}=\big|\norm{u-z}-\norm{y-x}\big|$.

We may now apply this result to the spaces $X=L^p([0,1])$, $1\leqslant p<\infty$, whose almost transitivity was established by A. Pe\l czy\'nski and S. Rolewicz \cite[Theorem 9.6.3, Theorem 9.6.4]{Rolewicz}. That the linear isometry groups ${\sf Isom}\big(L^p([0,1])\big)$ are globally bounded is, in turn, a consequence of results due to C. W. Henson \cite{Henson}. Indeed, by \cite[Fact 17.6]{Henson}, for $1\leqslant p<\infty$, the Banach lattice $L^p([0,1])$ is $\omega$-categorical in the sense of model theory for metric structures. That means that its theory has a unique separable model. Furthermore, by \cite[Theorem 12.10 and Corollary 12.11]{Henson}, the tautological action ${\sf Isom}\big(L^p([0,1])\big)\curvearrowright S_{L^p([0,1])}$ on the unit sphere of ${L^p([0,1])}$ is approximately oligomorphic, which by \cite[Theorem 5.2]{OB} implies that ${\sf Isom}\big(L^p([0,1])\big)$ is globally bounded.


\section{Discrete groups of  bounded eccentricity}\label{sec:eccentric}
The examples of Section \ref{sec:spherical} motivate two questions that we will answer by a single construction. Namely, is there any difference between asymptotic sphericity and bounded eccentricity? And are the only finitely generated discrete groups of bounded eccentricity those of Corollary \ref{cor:main}, that is, the virtually cyclic groups? 

For the discussion here and in the next section, the following notation is useful.
\begin{defn}
For two functions $\phi,\psi\colon G\to \R$, set
    $$
    \phi\lesssim \psi\quad\equi\quad \limsup_{x\to \infty}\frac {\phi(x)}{\psi(x)}\leqslant 1\quad\equi\quad \liminf_{x\to \infty}\frac {\psi(x)}{\phi(x)}\geqslant 1,
    $$
    and set
    $$
    \phi\sim \psi \quad\equi\quad \phi\lesssim \psi\;\;\&\;\; \psi\lesssim \phi \quad\equi\quad \lim_{x\to \infty}\frac {\psi(x)}{\phi(x)}=1.
    $$
\end{defn}

Let $e_1, e_2$ denote the free basis of the abelian group $\Z^2$. The elements of $\Z^2$ can thus be expressed  as 
$$
n_1e_1+n_2e_2
$$
for  unique $n_i\in \Z$. Let also $\theta$ be the automorphism of $\Z^2$ of order $4$ given by
$$
e_1\;\overset{\theta}{\longmapsto}\;e_2\;\overset{\theta}{\longmapsto}\;-e_1\;\overset{\theta}{\longmapsto}\;-e_2\;\overset{\theta}{\longmapsto}\;e_1.
$$
The automorphism $\theta$ thus induces the semidirect product
$$
C_{4}\ltimes \Z^2,
$$
where $C_{4}$ denotes the cyclic group of order $4$ with generator $\theta$. Keeping the additive notation for the normal subgroup $\Z^2$ and multiplicative notation in the semidirect product, we thus have the basic relation
$$
\theta\cdot(n_1e_1+n_2e_2)\cdot \theta\inv=-n_2e_1+n_1e_2.
$$

\bounded*


\begin{proof}
Fix a maximal length function $\ell$ on $C_{4}\ltimes \Z^2$. By rescaling $\ell$, we may assume that
$$
\lim_{n\to \infty}\frac{\ell(ne_1)}{n}=1.
$$
For all $n\in \Z$, we have $\theta\cdot ne_1\cdot \theta\inv=e_{2}$ and hence $\big|\ell(ne_1)-\ell(ne_{2})\big| \leqslant 2\cdot \ell(\theta)$. 
It thus follows that
$$
\lim_{n\to \infty}\frac{\ell(-ne_i)}{n}=\lim_{n\to \infty}\frac{\ell(ne_i)}{n}=\lim_{n\to \infty}\frac{\ell(ne_1)}{n}=1,
$$
that is, 
$$
\lim_{|n|\to \infty}\frac{\ell(ne_i)}{|n|}=1.
$$
Observe also that 
$$
2ne_2=(ne_1+ne_2)+(-ne_1+ne_2)=(ne_1+ne_2)+\theta\cdot(ne_1+ne_2)\cdot\theta\inv,
$$
whereby
$$
\ell(2ne_2)\leqslant 2\ell(ne_1+ne_2)+2\ell(\theta).
$$

\begin{claim}
$$
\limsup_{|n_1|+|n_2|\to \infty}\frac{\ell(n_1e_1+n_2e_2)}{|n_1|+|n_2|}\leqslant1.
$$
\end{claim}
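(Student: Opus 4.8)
The plan is to deduce the claim from subadditivity of $\ell$ together with the one-dimensional asymptotics already established. First I would note that, because the group law on the normal subgroup $\Z^2$ is ordinary addition, we have the factorisation $n_1e_1+n_2e_2=(n_1e_1)\cdot(n_2e_2)$ in $C_4\ltimes\Z^2$, so that
\[
\ell(n_1e_1+n_2e_2)\;\leqslant\;\ell(n_1e_1)+\ell(n_2e_2)
\]
by the triangle inequality. Thus it suffices to bound each $\ell(n_ie_i)$ by roughly $|n_i|$, uniformly in $n_i$.

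Next I would upgrade the limit $\lim_{|n|\to\infty}\ell(ne_i)/|n|=1$, valid for $i=1,2$, into a bound uniform over all of $\Z$: given $\e>0$, pick $N$ with $\ell(ne_i)\leqslant(1+\e)|n|$ whenever $|n|\geqslant N$, and set $D=\max\{\ell(me_i)\mid i\in\{1,2\},\ |m|<N\}$; then $\ell(ne_i)\leqslant(1+\e)|n|+D$ for \emph{every} $n\in\Z$. Feeding this into the previous display yields
\[
\ell(n_1e_1+n_2e_2)\;\leqslant\;(1+\e)\big(|n_1|+|n_2|\big)+2D
\]
for all $(n_1,n_2)\in\Z^2$, and dividing by $|n_1|+|n_2|$ and letting $|n_1|+|n_2|\to\infty$ shows the $\limsup$ in question is at most $1+\e$; since $\e>0$ was arbitrary, it is at most $1$.

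I do not expect a genuine obstacle here: the content is just subadditivity combined with the axis asymptotics, and the only point requiring a moment's thought is that one of $|n_1|,|n_2|$ may stay bounded while the other tends to infinity — which is exactly why one wants the uniform additive constant $D$ rather than merely an asymptotic statement along each axis. (The auxiliary inequality $\ell(2ne_2)\leqslant 2\ell(ne_1+ne_2)+2\ell(\theta)$ recorded just before the claim plays no role in proving it; it is presumably reserved for the matching lower bound established afterwards.)
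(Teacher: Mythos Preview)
Your proof is correct and follows the same idea as the paper's: split $n_1e_1+n_2e_2$ along the axes via subadditivity of $\ell$ and invoke the one-dimensional asymptotics $\ell(ne_i)/|n|\to 1$. Your packaging (absorbing the small-$|n|$ regime into a single additive constant $D$) is in fact slightly cleaner than the paper's case analysis, but the content is identical; and you are right that the inequality $\ell(2ne_2)\leqslant 2\ell(ne_1+ne_2)+2\ell(\theta)$ is not used here and is reserved for the lower bound.
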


\begin{proof}
Let $\eta>1$ be given.  We pick $N_1$ large enough so that
${\ell(ne_i)}<\eta\cdot{|n|}$
whenever  $|n|\geqslant N_1$. Then, if $n_1,n_2\in \Z$ with 
$$
|n_1|+|n_2|\geqslant \frac {\max\big\{\ell(n_1e_1+n_2e_2)\;\big|\;  |n_i|<N_1\big\} }{\eta(\eta-1)},
$$
we have  
\mathes{
\ell(n_1e_1+n_2e_2)
&\leqslant \ell\bigg(\sum_{|n_i|<N_1} n_ie_i     \bigg)
\;+\;
\sum_{|n_i|\geqslant N_1} \ell(n_ie_i)   \\
&\leqslant\eta(\eta-1)\Big(|n_1|+|n_2|\Big)+ \sum_{|n_i|\geqslant N_1}\eta\cdot |n_i|\\
&\leqslant\eta(\eta-1)\Big(|n_1|+|n_2|\Big)+\eta\cdot \Big(|n_1|+|n_2|\Big)\\
&\leqslant\eta^2\cdot\Big( |n_1|+|n_2|\Big).
\qedhere
}
\end{proof}
Observe that for all $0\leqslant n_1\leqslant n_2$,
$$
\ell(n_1e_1+n_2e_2)\geqslant \ell(n_2e_2)-\ell(-n_1e_1)= \ell(n_2e_2)-\ell(n_1e_1),
$$
and
\mathes{
\ell(n_1e_1+n_2e_2)
&\geqslant \ell(n_2e_1+n_2e_2)-\ell\big((n_2-n_1)e_1\big)\\
&\geqslant \tfrac {\ell(2n_2e_2)}2-\ell(\theta)-\ell\big((n_2-n_1)e_1\big).
}
Moreover, either  $n_1\leqslant\tfrac {n_2}2$ or $n_2-n_1\leqslant\tfrac {n_2}2$.
It follows from this that
$$
\liminf_{\substack{n_1\to \infty   \\  0\leqslant n_1\leqslant n_2}}\frac{\ell(n_1e_1+n_2e_2)}{n_2}\geqslant \frac 12.
$$
Treating other cases similarly, we find that
$$
\frac 12\;\;\leqslant\;\; \liminf_{|n_1|+|n_2|\to \infty}\frac{\ell(n_1e_1+n_2e_2)}{\max\big\{|n_1|,|n_2|\big\}}\;\;\leqslant\;\;
\limsup_{|n_1|+|n_2|\to \infty}\frac{\ell(n_1e_1+n_2e_2)}{|n_1|+|n_2|}\;\;\leqslant\;\;1.
$$
Thus, if we define two pseudolength functions $\norm\cdot_1$ and $\norm\cdot_\infty$ on $C_4\ltimes \Z^2$ by
$$
\Norm{\theta^i\cdot(n_1e_1+n_2e_2)}_1=|n_1|+|n_2|\quad\&\quad \Norm{\theta^i\cdot(n_1e_1+n_2e_2)}_\infty=\max\big\{|n_1|,|n_2|\big\},
$$
we have 
$$
\tfrac 14\norm\cdot_1\;\;\lesssim\;\; \tfrac 12\norm\cdot_\infty \;\;\lesssim\;\;  \ell\;\;\lesssim\;\;  \norm\cdot_1
$$
and conclude that $C_4\ltimes \Z^2$ has bounded eccentricity.

On the other hand, $\alpha(\norm\cdot_1, \norm\cdot_\infty) = \log(2)$, so $C_4\ltimes \Z^2$ is not asymptotically spherical.
\end{proof}

We can see from the above proof that, for any two maximal length functions $\ell$ and $\ell'$ on $C_{4}\ltimes \Z^2$, $\alpha(\ell,\ell') \leqslant 4\log(2)$; meanwhile, the $\alpha$-diameter of $C_{4}\ltimes \Z^2$ is at least $\log(2)$. 
However, we do not know the exact $\alpha$-diameter.

\begin{question}
    What is the $\alpha$-diameter of $C_{4}\ltimes \Z^2$?
\end{question}

\begin{remark}
    By a similar argument, we see that $C_{4}\ltimes \R^2$ (where $C_4$ acts by rotation of $\R^2$ as in the example above) also has bounded eccentricity, but is not asymptotically spherical.
    Thus, in both the discrete and non-discrete cases, bounded eccentricity does not imply asymptotic sphericity.
\end{remark}


\section{Geodesic properties of asymptotically spherical groups}

If $A$ is any generating set for a group $G$, we let $\rho_A$ denote the {\em word length function} associated with $A$, that is,
$$
\rho_A(x)=\inf\big(k\;\big|\; x=a_1\cdots a_k\;\;\&\;\; a_i\in A^\pm\big).
$$
Of course, if $G$ is non-discrete, the integral valued length function $\rho_A$ will not be compatible with the topology on $G$, although depending on $A$ it may still be quasi-isometric with a maximal length function.  Thus, if $G$ is a metrisable topological group admitting an unbounded length function $\ell$, we extend the pseudometric $\alpha$ to the class of all length functions on $G$ that are quasi-isometric to $\ell$ by the formula of Proposition \ref{prop:pseudometric}.

    Suppose $G$ is a metrisable topological group admitting  unbounded length functions and set
    \begin{enumerate}
        \item $\mathcal M=$ the set of all compatible  maximal length function on $G$,
        \item $\mathcal L=$ the set of all length function on $G$ quasi-isometric to maximal length functions,
        \item $\mathcal W=$ the set of all word length functions $\rho_A$ on $G$ quasi-isometric to maximal length functions.
    \end{enumerate}

\begin{problem}
How are the three quantities ${\sf diam}_\alpha(\mathcal M)$, ${\sf diam}_\alpha(\mathcal L)$ and ${\sf diam}_\alpha(\mathcal W)$ related? In particular, do the following equivalences hold,
$$
{\sf diam}_\alpha(\mathcal M)=0\quad\equi\quad {\sf diam}_\alpha(\mathcal L)=0\quad\equi\quad {\sf diam}_\alpha(\mathcal W)=0\;?
$$
Similarly, what is the maximal $\alpha$-distance from an element of $\mathcal W$ to the set $\mathcal M$ and vice versa? 
\end{problem}

We shall not discuss this problem in full generality, but instead concentrate on the better behaved class of Polish groups. 
So let $G$ be a fixed Polish group and recall that a subset $A\subseteq G$ is bounded if and only if ${\sf rad}_\ell(A)<\infty$ for all compatible length functions $\ell$ on $G$. Note also that $A$ is bounded if and only if the topological closure $\overline A$ is bounded. 
We also  recall a few results from \cite[pp. 60-63]{Rosendal}. First of all, $G$ admits a maximal length function if and only if it is generated by a bounded set (which we may take to be closed or open).  Secondly, for any  bounded generating Borel (or even analytic) set $A$, the associated word length function $\rho_A$ is quasi-isometric to a maximal length function on $G$. Thirdly, a compatible length function $\ell$ is maximal if and only  if is is quasi-isometric to the word length function $\rho_A$ for a bounded generating set $A$. For these reasons, it makes sense to restrict the attention to word length functions $\rho_A$, where $A$ is either closed or open, rather than arbitrary bounded sets $A$.

The following construction (cf. \cite[Lemma 2.70]{Rosendal}) will be come into play at several occasions.
\begin{lemma}\label{lem:smoothed length functions}
    Suppose $\ell$ is a compatible length function on a Polish group $G$ and that $A\subseteq G$ is a symmetric generating set so that $B_\ell(r)\subseteq A\subseteq B_\ell(R)$ for some radii $0<r\leqslant R$. Then
    $$
    \ell_A(x)=\inf\Big(\sum_{i=1}^k\ell(a_i)\;\Big|\; x=a_1\cdots a_k\;\;\&\;\; a_i\in A\Big)
    $$
    defines a compatible length function on $G$ satisfying
    $$
    \tfrac r2\cdot {\rho_A} - \tfrac r2\;\leqslant\; \ell_A\:\leqslant\; R\cdot \rho_A.
    $$
\end{lemma}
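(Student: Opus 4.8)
The plan is to verify three things in turn — that $\ell_A$ is a pseudolength function, that it is compatible with the topology of $G$, and that it satisfies the two displayed inequalities — with property~\eqref{pseudolength} of a length function falling out of compatibility. Throughout, recall that since $A$ is symmetric, $A^\pm=A$, so $\rho_A(x)$ is just the length of a shortest word in $A$ representing $x$.

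The routine parts come first. One checks $\ell_A(1)=0$ from the one-letter decomposition $1=1$ (note $1\in B_\ell(r)\subseteq A$), $\ell_A(g\inv)=\ell_A(g)$ by reversing a decomposition $g=a_1\cdots a_k$ to $g\inv=a_k\inv\cdots a_1\inv$ and invoking $A=A\inv$ together with $\ell(a\inv)=\ell(a)$, and subadditivity by concatenating decompositions of $g$ and $h$. For the upper bound, a geodesic $A$-word $x=a_1\cdots a_{\rho_A(x)}$ has each $a_i\in A\subseteq B_\ell(R)$, so $\ell(a_i)<R$ and $\ell_A(x)\leqslant\sum_i\ell(a_i)<R\cdot\rho_A(x)$; in particular $\ell_A(x)<\infty$ because $A$ generates.

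Next, compatibility. I would record two comparisons with $\ell$ itself: $\ell\leqslant\ell_A$, which is just subadditivity of $\ell$ applied to an arbitrary decomposition followed by taking the infimum; and $\ell_A(x)\leqslant\ell(x)$ whenever $\ell(x)<r$, since then $x\in B_\ell(r)\subseteq A$ is a one-letter word. Combining these, for every $0<\rho\leqslant r$ one gets $B_{\ell_A}(\rho)=B_\ell(\rho)$: if $\ell(x)<\rho$ then $x\in A$ and $\ell_A(x)\leqslant\ell(x)<\rho$, while if $\ell_A(x)<\rho$ then $\ell(x)\leqslant\ell_A(x)<\rho$. Since $\ell$ is compatible, $\{B_\ell(\rho):0<\rho\leqslant r\}$ is a neighbourhood basis at $1$, hence so is $\{B_{\ell_A}(\rho):0<\rho\leqslant r\}$, so $\ell_A$ is compatible; and then $\ell_A(x)=0$ implies $\ell(x)=0$, so $x=1$.

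The heart of the argument is the lower bound $\tfrac r2\rho_A-\tfrac r2\leqslant\ell_A$. Fix $x\neq1$ and $\epsilon>0$, and among the nonempty set of decompositions $x=a_1\cdots a_m$ with $a_i\in A$ and $\sum_i\ell(a_i)<\ell_A(x)+\epsilon$ choose one with $m$ minimal. The key point is that then $\ell(a_i)+\ell(a_{i+1})\geqslant r$ for each $1\leqslant i<m$: otherwise $b:=a_ia_{i+1}$ satisfies $\ell(b)\leqslant\ell(a_i)+\ell(a_{i+1})<r$, so $b\in B_\ell(r)\subseteq A$, and replacing $a_ia_{i+1}$ by the single letter $b$ gives a decomposition of length $m-1$ whose $\ell$-sum has not increased, contradicting minimality. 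Summing over the $\lfloor m/2\rfloor$ disjoint pairs $(a_1,a_2),(a_3,a_4),\dots$ yields $\ell_A(x)+\epsilon>\sum_i\ell(a_i)\geqslant\lfloor m/2\rfloor\,r\geqslant\tfrac r2(m-1)\geqslant\tfrac r2(\rho_A(x)-1)$, and letting $\epsilon\to0$ settles the case $x\neq1$; the case $x=1$ is trivial. The one delicate point — and the only real obstacle — is that one must take $m$ minimal among the \emph{near}-optimal decompositions, since the infimum defining $\ell_A$ need not be attained, and it is exactly this minimality, ruling out two consecutive short letters, that produces the sharp constant $\tfrac r2$ rather than the weaker $\tfrac r4$ that cruder groupings give; everything else is bookkeeping with the length-function axioms.
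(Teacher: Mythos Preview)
Your proof is correct and follows essentially the same route as the paper's: both reduce to decompositions in which no two consecutive letters can be merged into a single element of $A$, then sum over $\lfloor m/2\rfloor$ disjoint pairs to extract the factor $\tfrac r2$. The only cosmetic difference is that the paper phrases the reduction as ``we may assume $a_ia_{i+1}\notin A$'' (so $\ell(a_ia_{i+1})\geqslant r$, then uses subadditivity), whereas you work directly with $\ell(a_i)+\ell(a_{i+1})\geqslant r$ via an $\epsilon$-near-optimal decomposition of minimal length; your handling of the non-attained infimum is arguably a touch more careful.
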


\begin{proof}
Evidently $\ell_A$ is a length function satisfying $\ell\leqslant \ell_A$. Moreover, for all $x\in B_\ell(r)$, we have $\ell_A(x)=\ell(x)$, which implies that $\ell_A$ is compatible with the topology on $G$. 

Observe that, if  $x=a_1\cdots a_k$ is any factorisation of $x$ as a product of elements $a_i\in A$ and $a_ia_{i+1}\in A$ for some $i$, then 
$$
\ell_A(x)\;\leqslant\; \ell(a_1)+\ldots+\ell(a_{i-1})+\ell(a_ia_{i+1})+\ell(a_{i+2})+\cdots+\ell(a_k)\;\leqslant\; \sum_{i=1}^k\ell(a_i).
$$
This means that, in the definition of $\ell_A$, we may assume that $a_ia_{i+1}\notin A$ and hence that $\ell(a_ia_{i+1})\geqslant r$ for all $i$. Observe also that, if $x=a_1\cdots a_k$ is such a factorisation with say $k$ odd, then
$$
r\cdot \frac {{\rho_A}(x) - 1}2\;\;\leqslant\;\;r\cdot \frac {k- 1}2\;\;\leqslant\;\; \ell(a_1a_2)+\cdots+ \ell(a_{k-2}a_{k-1})\;\;\leqslant\;\; \sum_{i=1}^k\ell(a_i).
$$
Similarly if $k$ is even. Therefore, $r\cdot \frac {{\rho_A} - 1}2\;\leqslant\; \ell_A$. Conversely, for any factorisation $x=a_1\cdots a_k$ into elements of $A$, we have $\sum_{i=1}^k\ell(a_i)\leqslant R\cdot k$, which shows that $\ell_A\leqslant R\cdot \rho_A$. We conclude that
$$
r\cdot \frac {{\rho_A} - 1}2\;\leqslant\; \ell_A\:\leqslant\; R\cdot \rho_A
$$
and hence that $\rho_A$ and $\ell_A$ are quasi-isometric.
\end{proof}

\begin{theorem}\label{thm:smoothed length}
Suppose $\ell$ is a compatible maximal length function on an asymptotically spherical group $G$. Then, for all $\eta>1$ and all sufficiently large $R$, we have 
$$
\ell\;\leqslant\; \ell_{B_\ell(R)}\;<\; \eta\cdot \ell,
$$
i.e., every  $x\in G$ can be factored as $x=v_1v_2\cdots v_k$ where $\ell(v_i)<R$ and 
$$
\sum_{i=1}^k\ell(v_i)<\eta\cdot\ell(x).
$$
\end{theorem}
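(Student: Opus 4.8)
The plan is to work with the smoothed length functions $\ell_R := \ell_{B_\ell(R)}$ of Lemma~\ref{lem:smoothed length functions} and to show that they approximate $\ell$ multiplicatively as $R\to\infty$. First note that for every $R$ large enough that $B_\ell(R)$ generates $G$, the set $B_\ell(R)$ is a symmetric, bounded, Borel generating set, so by Lemma~\ref{lem:smoothed length functions} the function $\ell_R$ is a compatible length function quasi-isometric to $\rho_{B_\ell(R)}$, hence (by the characterisation of maximal length functions recalled above) itself a maximal length function. Moreover $\ell\leqslant\ell_R$ always, $\ell_R(x)=\ell(x)$ whenever $\ell(x)<R$ (the one-term factorisation is admissible), and $R\mapsto\ell_R$ is pointwise non-increasing. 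Since $G$ is asymptotically spherical, for each such $R$ the limit $\lambda_R:=\lim_{x\to\infty}\ell_R(x)/\ell(x)$ exists; it satisfies $\lambda_R\geqslant1$ because $\ell\leqslant\ell_R$, and it is non-increasing in $R$. Put $\lambda_\infty:=\inf_R\lambda_R\geqslant1$.

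I claim the theorem follows once we know $\lambda_\infty=1$. Indeed, fix $\eta>1$ and pick $R_1$ with $\lambda_{R_1}<\eta$. Since $\limsup_{x\to\infty}\ell_{R_1}(x)/\ell(x)=\lambda_{R_1}<\eta$, there is $T$ with $\ell_{R_1}(x)<\eta\,\ell(x)$ whenever $\ell(x)\geqslant T$. Now take any $R\geqslant\max(R_1,T)$. If $\ell(x)<R$ then $\ell_R(x)=\ell(x)<\eta\,\ell(x)$ for $x\neq1$, while if $\ell(x)\geqslant R\geqslant T$ then $\ell_R(x)\leqslant\ell_{R_1}(x)<\eta\,\ell(x)$ using $R\geqslant R_1$. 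Together with $\ell\leqslant\ell_R$ this gives $\ell\leqslant\ell_R<\eta\,\ell$ for all sufficiently large $R$, which is the statement (the factorisation reformulation being immediate from the definition of $\ell_R$ as an infimum).

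It remains to prove $\lambda_\infty=1$, and this is where I expect the real work to lie. My approach is through stable lengths: for $g\in G$ the limit $\tau_\ell(g):=\lim_n\ell(g^n)/n$ exists by subadditivity, satisfies $\tau_\ell(g)\leqslant\ell(g)$, and obeys $\tau_\ell(g^k)=k\,\tau_\ell(g)$ together with $\ell(g^k)/k\to\tau_\ell(g)$. Suppose $g$ has $\tau_\ell(g)>0$. Fix $k$ and any $R>\ell(g^k)$. Factoring $g^{km}=(g^k)^m$ into $m$ copies of $g^k\in B_\ell(R)$ gives $\ell_R(g^{km})\leqslant m\,\ell(g^k)$; since $\ell(g^{km})\to\infty$ we may evaluate $\lambda_R=\liminf_{x\to\infty}\ell_R(x)/\ell(x)$ along this sequence to obtain $\lambda_R\leqslant\lim_m m\,\ell(g^k)/\ell(g^{km})=\ell(g^k)/\tau_\ell(g^k)=\ell(g^k)/\bigl(k\,\tau_\ell(g)\bigr)$. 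Letting $R\to\infty$ and then $k\to\infty$ yields $\lambda_\infty\leqslant\lim_k\ell(g^k)/\bigl(k\,\tau_\ell(g)\bigr)=1$, hence $\lambda_\infty=1$.

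Thus the entire argument reduces to exhibiting one element $g$ with $\tau_\ell(g)>0$ — a ``loxodromic-like'' element whose powers escape linearly — and this is the step I expect to be the main obstacle in full generality. For every group in Corollary~\ref{cor:main} such an element is visible directly (a nontrivial translation of $\R^n$, a hyperbolic automorphism of $T_n$, a translation along an isometrically embedded line in Urysohn space, a translation by a unit vector in $L^p$), so the result is unconditional there. In general one would want to rule out ``uniformly elliptic'' asymptotically spherical groups ($\tau_\ell\equiv0$, as happens for infinite torsion groups) by manufacturing a compatible maximal length function $\ell^\ast$ of the form $\ell^\ast(x)=\inf\{\sum_iw(v_i):x=v_1\cdots v_k\}$, where $w$ equals $\ell$ on $B_\ell(R_1)$ and on certain annuli and $\eta'\ell$ on the complementary annuli for a rapidly growing sequence of radii; one expects $\ell^\ast$ to agree with $\ell$ along one sequence of scales but to be bounded below by $\lambda_\infty\,\ell$ along another, forcing $\alpha(\ell,\ell^\ast)>0$ and contradicting asymptotic sphericity. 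Making these estimates precise — in particular controlling the $R$-dependent transition scale between the regimes $\ell_R\approx\ell$ and $\ell_R\approx\lambda_R\,\ell$ — is the delicate point I would have to confront.
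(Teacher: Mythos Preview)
Your reduction to showing $\lambda_\infty=1$ is correct and matches the paper's setup (with $\lambda_R$ playing the role of their $\beta_{B_\ell(R)}$ and $\lambda_\infty$ their $\beta$). The derivation of the theorem from $\lambda_\infty=1$ is also fine.

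The genuine gap is in how you establish $\lambda_\infty=1$. Your argument via stable lengths is valid \emph{conditional on} the existence of some $g\in G$ with $\tau_\ell(g)>0$, and you acknowledge this openly. But nothing in the hypotheses guarantees such an element: asymptotic sphericity is a statement about ratios of length functions and says nothing directly about the growth of individual orbits $n\mapsto\ell(g^n)$. Your proposed workaround for the ``uniformly elliptic'' case---building an auxiliary length function $\ell^\ast$ by reweighting on annuli---is only a sketch, and you yourself flag the key difficulty (controlling the transition scale) as unresolved. So as written the proof is incomplete in full generality.

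The paper's proof of $\beta=1$ avoids stable lengths entirely and proceeds by a multi-scale regrouping argument. Assuming $\beta>1$, one picks $\eta$ with $\eta^4<\beta$ and three radii $r_1<r_2<r_3$ so that both $\ell_{B_\ell(r_1)}$ and $\ell_{B_\ell(r_2+r_1)}$ are pinched between $\tfrac\beta\eta\ell$ and $\eta\beta\ell$ on the relevant ranges. One then takes a long element $y$, factors it as $v_1\cdots v_k$ with $v_i\in B_\ell(r_1)$ and $\sum\ell(v_i)<\eta\beta\,\ell(y)$, and regroups consecutive $v_i$'s into blocks each of $\ell$-length in $[r_2,r_2+r_1)$. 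Applying the $\tfrac\beta\eta$ lower bound twice---once to the regrouped factorisation (at scale $r_2+r_1$) and once to each block (at scale $r_1$)---forces $\sum\ell(v_i)<\tfrac{\eta^4}\beta\sum\ell(v_i)$, a contradiction. The point is that asymptotic sphericity is invoked at two nested scales simultaneously, and the assumed gap $\beta>1$ compounds to $\beta^2$ against only $\eta^4$ in losses. No individual element with linear escape is ever needed.
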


\begin{proof}
    For every symmetric open and bounded generating set $1\in V\subseteq G$,  $\ell_V$ is a compatible length function on $G$ quasi-isometric to the word length function $\rho_V$. So $\ell_V$ is maximal and 
 we may therefore define
    $$
    \beta_V\defeq \lim_{x\to \infty}\frac{\ell_V(x)}{\ell(x)}.
    $$
    Observe also that,  if $W$ is another symmetric open and bounded generating set with $1\in V\subseteq W\subseteq G$, then $\ell\leqslant \ell_W\leqslant \ell_V$, which shows that 
    $$
    1\leqslant \beta_W\leqslant \beta_V.
    $$
    We thus let $\beta=\inf_V\beta_V$ and claim that $\beta=1$.

    Note that, if the claim has been established and $\eta>1$ is given, then we may find some symmetric open and bounded generating set $1\in V\subseteq G$ so that $\beta_V<\eta$. Let also $R$ be large enough so that $V\subseteq B_\ell(R)$ and so that, for all $x\in G$ with $\ell(x)\geqslant R$, we have 
    $$
    \frac{\ell_V(x)}{\ell(x)}<\eta.
    $$
    Then, for such $x$,  we can write $x=v_1\cdots v_k$ with $v_i\in V$, whereby $\ell(v_i)<R$, and 
    $$
    \sum_{i=1}^k\ell(v_i)<\eta\cdot\ell(x).
    $$
    On the other hand, if $\ell(x)<R$, then $x=v_1$ is its own factorisation. This shows that the theorem follows from the claim.

    For the proof of the claim, suppose for a contradiction that $\beta>1$ and choose some $\eta>1$ so that $\eta^4<\beta$. Let also $r_1$ be large enough such that
    $$
    \beta\leqslant \beta_{B_\ell(r_1)}<\eta\beta
    $$
    and some $r_2>r_1$ such that
    \begin{equation}\label{eq:r1}
    \frac \beta\eta\ell(x)<\ell_{B_\ell(r_1)}(x)<\eta\beta\cdot\ell(x)
    \end{equation}
    whenever $\ell(x)\geqslant r_2$. We then pick $r_3\geqslant r_2$ so that $1+\tfrac{r_2}{r_3}<\eta$ and also
      \begin{equation}\label{eq:r2}
    \frac \beta\eta\ell(x)<\ell_{B_\ell(r_2+r_1)}(x)<\eta\beta\cdot\ell(x)
    \end{equation}
    whenever $\ell(x)\geqslant r_3$.

    Choose now any $y\in G$ with $\ell(y)>r_3+r_2$. By \eqref{eq:r1}, we may then write $y=v_1\cdots v_k$ for some $v_i\in B_\ell(r_1)$ so that
     \begin{equation}\label{eq:vi}
    \sum_{i=1}^k\ell(v_i)<\eta\beta\cdot\ell(y).
    \end{equation}
    Grouping from the left, we can then find $0=n_1<n_2<\ldots< n_p\leqslant k$ so that
     \begin{equation}\label{eq:vnj}
    r_2\leqslant \ell(v_{n_j+1}v_{n_j+2}\cdots v_{n_{j+1}})<r_2+r_1
    \end{equation}
    for all $j=1,\ldots, p-1$, whereas 
    \begin{equation}\label{eq:vnjr2}
    \ell(v_{n_p+1}v_{n_p+2}\cdots v_{k})<r_2.
    \end{equation}
    Letting $y'=v_1v_2\cdots v_{n_p}$, we see that 
    \begin{equation}\label{eq:y'r3}
    \ell(y')\geqslant \ell(y)-\ell(v_{n_p+1}v_{n_p+2}\cdots v_{k})>(r_3+r_2)-r_2=r_3
     \end{equation}
    and therefore 
    \begin{equation}\label{eq:y'}
    \frac\beta\eta\ell(y')\;<\;\ell_{B_\ell(r_2+r_1)}(y')\;\leqslant\;\sum_{j=1}^{p-1}\ell(v_{n_j+1}v_{n_j+2}\cdots v_{n_{j+1}})
    \end{equation}
    by \eqref{eq:r2}. Also, 
    \mathes{
    \sum_{i=1}^{n_p}\ell(v_i)
    &\leqslant\sum_{i=1}^k\ell(v_i)\\
    &\stackrel{\eqref{eq:vi}}{<} \eta\beta\cdot\ell(y)\\
    &\leqslant\eta\beta\cdot\big(\ell(y')+\ell(v_{n_p+1}v_{n_p+2}\cdots v_{k})\big)\\
    &\stackrel{\eqref{eq:vnjr2}}{<} \eta\beta\cdot\big(\ell(y')+r_2\big)\\
    &\stackrel{\eqref{eq:y'r3}}{<} \eta\beta\cdot\big(\ell(y')+\tfrac{r_2}{r_3}\ell(y')\big)\\
    &=\eta\beta\big(1+\tfrac{r_2}{r_3}\big)\cdot\ell(y')\\
    &<\eta^2\beta\cdot\ell(y').\\
    }
    On the other hand, by \eqref{eq:vnj} and \eqref{eq:r1}, we see that, for all $j=1,\ldots, p-1$,
    \begin{equation}\label{eq:j}
    \frac \beta\eta\ell(v_{n_j+1}v_{n_j+2}\cdots v_{n_{j+1}})<\ell_{B_\ell(r_1)}(v_{n_j+1}v_{n_j+2}\cdots v_{n_{j+1}})\leqslant \sum_{i=n_j+1}^{n_{j+1}}\ell(v_i),
    \end{equation}
    whereby
    \mathes{
    \sum_{i=1}^{n_p}\ell(v_i)
    &<\eta^2\beta\cdot\ell(y')\\
    &\stackrel{\eqref{eq:y'}}{< } \eta^3\cdot\sum_{j=1}^{p-1}\ell(v_{n_j+1}v_{n_j+2}\cdots v_{n_{j+1}})\\
    &\stackrel{\eqref{eq:j}}{< } \frac{\eta^4}\beta\cdot\sum_{i=1}^{n_p}\ell(v_i),\\
    }
    which is absurd as $\frac{\eta^4}\beta<1$. This contradiction proves our claim and thus the theorem.
\end{proof}

\begin{lemma}\label{smoothed vs rho}
    Suppose $\ell$ is a length function on a Polish group $G$ and that $G$ is generated by the ball $B_\ell(r)$ of some radius $r>0$. Then, for all $\zeta< 1$ and all sufficiently large $R$, we have
$$
\zeta\cdot R\rho_{B_\ell(R)}\;\;\lesssim\;\; \ell_{B_\ell(r)}\;\;\leqslant\;\; r\rho_{B_\ell(r)} .
$$
\end{lemma}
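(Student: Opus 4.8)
The plan is to prove the two inequalities separately. The right-hand bound $\ell_{B_\ell(r)}\leqslant r\rho_{B_\ell(r)}$ is essentially free: since $\ell(x\inv)=\ell(x)$, the ball $B_\ell(r)$ is symmetric, and by hypothesis it generates $G$, so for any $x$ I would take a shortest factorisation $x=a_1\cdots a_k$ into elements of $B_\ell(r)$ (so that $k=\rho_{B_\ell(r)}(x)$) and read off $\ell_{B_\ell(r)}(x)\leqslant\sum_{i=1}^k\ell(a_i)\leqslant rk$; this is a genuine pointwise inequality.

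For the left-hand bound I would fix $\zeta\in(0,1)$ and choose any $R\geqslant\frac{r}{1-\zeta}$, so that $R>r$ and $\zeta R\leqslant R-r$; this is what ``sufficiently large $R$'' will mean. The heart of the argument is the estimate that, for every $x\in G$,
\[
\rho_{B_\ell(R)}(x)\;\leqslant\;\frac{\ell_{B_\ell(r)}(x)}{R-r}+1 ,
\]
which I would prove by a greedy chunking of factorisations. Starting from an arbitrary factorisation $x=a_1\cdots a_k$ with $a_i\in B_\ell(r)$, group consecutive factors from the left into blocks, each running as far as possible while keeping its $\ell$-length below $R$: set $n_0=0$ and, given $n_j<k$, let $n_{j+1}$ be the largest index in $\{n_j+1,\ldots,k\}$ with $\ell(a_{n_j+1}\cdots a_{n_{j+1}})<R$ (this makes sense because $\ell(a_{n_j+1})<r\leqslant R$), stopping when $n_p=k$. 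Then $x=b_1\cdots b_p$ with each $b_j=a_{n_{j-1}+1}\cdots a_{n_j}\in B_\ell(R)$, so $\rho_{B_\ell(R)}(x)\leqslant p$. Every block but the last is long: for $j<p$ we have $n_j<k$, and maximality of the cut forces $\ell(a_{n_{j-1}+1}\cdots a_{n_j+1})\geqslant R$, whence $\ell(b_j)\geqslant R-\ell(a_{n_j+1})>R-r$. Summing over $j=1,\ldots,p-1$ and using $\sum_j\ell(b_j)\leqslant\sum_i\ell(a_i)$ (triangle inequality) gives $(p-1)(R-r)<\sum_i\ell(a_i)$; taking the infimum over all factorisations of $x$ yields the displayed estimate.

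Granting the estimate, I would multiply it by $\zeta R$ and use $\zeta R\leqslant R-r$ to get $\zeta R\,\rho_{B_\ell(R)}(x)\leqslant\ell_{B_\ell(r)}(x)+\zeta R$, hence
\[
\frac{\zeta R\,\rho_{B_\ell(R)}(x)}{\ell_{B_\ell(r)}(x)}\;\leqslant\;1+\frac{\zeta R}{\ell_{B_\ell(r)}(x)} .
\]
Since $\ell_{B_\ell(r)}$ is quasi-isometric to $\rho_{B_\ell(r)}$ by Lemma \ref{lem:smoothed length functions}, hence to a maximal length function, we have $\ell_{B_\ell(r)}(x)\to\infty$ as $x\to\infty$, so the $\limsup$ of the left-hand side is at most $1$ --- which is exactly $\zeta R\,\rho_{B_\ell(R)}\lesssim\ell_{B_\ell(r)}$. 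I expect the chunking step to be the only real work, and in particular the lower bound $\ell(b_j)>R-r$ for non-terminal blocks: it is this bound that caps the number of blocks at roughly $\ell_{B_\ell(r)}(x)/(R-r)$, so that the loss factor $R/(R-r)$ tends to $1$ as $R\to\infty$, which is precisely why $\zeta$ can be taken arbitrarily close to $1$.
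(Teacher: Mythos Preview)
Your proof is correct and follows essentially the same route as the paper's: both establish the right-hand bound directly from a shortest factorisation, and both obtain the left-hand bound by greedily chunking a $B_\ell(r)$-factorisation into maximal blocks of $\ell$-length below $R$, using that non-terminal blocks have $\ell$-length at least $R-r$ to bound $\rho_{B_\ell(R)}(x)-1$ by $\ell_{B_\ell(r)}(x)/(R-r)$. Your presentation is slightly more explicit in justifying why $\ell_{B_\ell(r)}(x)\to\infty$ is needed for the $\lesssim$, but the argument is the same.
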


\begin{proof}
Suppose that $x=v_1\cdots v_k$ with $\ell(v_i)<r$. Let $R$ large enough so that $\zeta<\frac {R-r}R$. By grouping from the left, we can find $0=m_0<m_1<\ldots<m_p\leqslant k$ such that
$$
R-r\;\leqslant\; \ell(v_{m_j+1}\cdots v_{m_{j+1}})\;<\;R
$$
for all $j=0,\ldots, p-1$ and 
 $$
\ell(v_{m_p+1}v_{m_p+2}\cdots v_k)<R.
$$   
We set $w_j=v_{m_j+1}\cdots v_{m_{j+1}}$ for $j=0,\ldots, p-1$ and $w_p=v_{m_p+1}v_{n_p+2}\cdots v_k$, whereby $x=w_0w_1\cdots w_p\in B_\ell(R)^{p+1}$ and $\rho_{B_\ell(R)}(x)\leqslant p+1$.  Furthermore,
\mathes{
\Big(\rho_{B_\ell(R)}-1\Big)(R-r)    \;\leqslant\; p(R-r)\;\leqslant \;\sum_{j=0}^p\ell(w_j)\;\leqslant\;\sum_{i=1}^k\ell(v_i),
}
which shows that $\frac {R-r}R\cdot R\rho_{B_\ell(R)}\lesssim\ell_{B_\ell(r)}$. That $\ell_{B_\ell(r)}\leqslant r\rho_{B_\ell(r)}$ is evident and therefore
$$
\zeta\cdot R\rho_{B_\ell(R)}\;\;\lesssim\;\;\ell_{B_\ell(r)}\;\;\leqslant\;\; r\rho_{B_\ell(r)}$$
as was claimed.
\end{proof}

\begin{thm}\label{theorem_word_length}
  Suppose $\ell$ is a compatible maximal length function on an asymptotically spherical group $G$. Then, for every $\eta>1$ and all sufficiently large  $r$, we have
  $$
\ell\;\leqslant\;  r\rho_{B_\ell(r)}\;\lesssim\; \eta\cdot \ell.
  $$
  In particular, 
  $$
  \lim_{r\to \infty}\alpha\big( \rho_{B_\ell(r)}, \ell\big)=0.
  $$
\end{thm}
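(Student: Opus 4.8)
The plan is to sandwich $r\rho_{B_\ell(r)}$ between $\ell$ and $\eta\cdot\ell$ (up to $\lesssim$) by routing through the smoothed length functions $\ell_{B_\ell(R)}$ of Lemma \ref{lem:smoothed length functions}, for which Theorem \ref{thm:smoothed length} already gives the decisive estimate $\ell \leqslant \ell_{B_\ell(R)} < \eta'\cdot\ell$ once $R$ is large. First I would fix $\eta>1$ and choose $\eta'>1$ with $(\eta')^{2}<\eta$ (say), then apply Theorem \ref{thm:smoothed length} to obtain some $R_0$ so that $\ell \leqslant \ell_{B_\ell(R)} < \eta'\cdot\ell$ for all $R\geqslant R_0$. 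The lower bound $\ell \leqslant r\rho_{B_\ell(r)}$ is immediate and holds for all $r$: if $x=v_1\cdots v_k$ with each $v_i\in B_\ell(r)$, then $\ell(x)\leqslant \sum\ell(v_i)< r k$, so $\ell(x)< r\rho_{B_\ell(r)}(x)$ after minimizing over factorizations; in fact one gets $\ell \leqslant \ell_{B_\ell(r)}\leqslant r\rho_{B_\ell(r)}$ for free. So the content is the upper bound $r\rho_{B_\ell(r)}\lesssim \eta\cdot\ell$.

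For the upper bound I would combine the two halves of Lemma \ref{smoothed vs rho} with Theorem \ref{thm:smoothed length}. Apply Lemma \ref{smoothed vs rho} with a fixed small radius $r_0$ (any radius whose ball generates $G$; since $\ell$ is maximal this holds for all sufficiently large $r_0$, and I fix one): it gives, for any $\zeta<1$ and all sufficiently large $r$,
$$
\zeta\cdot r\rho_{B_\ell(r)}\;\lesssim\; \ell_{B_\ell(r_0)}\;\leqslant\; r_0\rho_{B_\ell(r_0)}.
$$
The left inequality is exactly what I want, reading it as $r\rho_{B_\ell(r)}\lesssim \tfrac1\zeta\,\ell_{B_\ell(r_0)}$. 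Now $\ell_{B_\ell(r_0)}$ is a compatible maximal length function (again by Lemma \ref{lem:smoothed length functions}, being quasi-isometric to $\rho_{B_\ell(r_0)}$), and since $\ell_{B_\ell(r_0)}\geqslant \ell$, one has $\ell_{B_\ell(r_0)}\lesssim \beta_{r_0}\cdot\ell$ for the constant $\beta_{r_0}=\lim_{x\to\infty}\ell_{B_\ell(r_0)}(x)/\ell(x)$ appearing in the proof of Theorem \ref{thm:smoothed length}. But the claim established there is precisely that $\beta=\inf_V\beta_V=1$, so by choosing $r_0$ large enough at the outset we may also ensure $\beta_{r_0}<\eta'$. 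Chaining these: $r\rho_{B_\ell(r)}\lesssim \tfrac1\zeta\,\ell_{B_\ell(r_0)}\lesssim \tfrac{\eta'}{\zeta}\,\ell$, and picking $\zeta$ close enough to $1$ and $\eta'$ close enough to $1$ makes $\tfrac{\eta'}{\zeta}<\eta$. This yields $\ell\leqslant r\rho_{B_\ell(r)}\lesssim\eta\cdot\ell$ for all sufficiently large $r$, and since $\eta>1$ was arbitrary, $\alpha(r\rho_{B_\ell(r)},\ell)\to 0$ as $r\to\infty$ by the definition of $\alpha$ and Proposition \ref{prop:pseudometric}.

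The one point that needs a little care — and which I expect to be the main obstacle — is the order of quantifiers: Lemma \ref{smoothed vs rho} requires the small radius $r_0$ to satisfy "$B_\ell(r_0)$ generates $G$" and then produces a threshold on $r$ depending on $\zeta$ and $r_0$, while the appeal to $\beta_{r_0}<\eta'$ forces $r_0$ to be large. I would resolve this by first fixing $r_0$ large enough that simultaneously $B_\ell(r_0)$ generates $G$ and $\beta_{r_0}<\eta'$ (possible since $\inf_V\beta_V=1$ and the sets $B_\ell(r_0)$ are cofinal among symmetric open bounded generating sets), and only then invoking Lemma \ref{smoothed vs rho} to get the threshold on $r$. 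The rest is bookkeeping with $\lesssim$, which is transitive and respects multiplication by positive constants, so no genuine difficulty remains. The final sentence about $\lim_{r\to\infty}\alpha(\rho_{B_\ell(r)},\ell)=0$ then follows because $\alpha(r\rho_{B_\ell(r)},\ell)=\alpha(\rho_{B_\ell(r)},\ell)$ (the pseudometric $\alpha$ is scale-invariant in each argument) and we have shown this quantity is eventually below any prescribed positive bound.
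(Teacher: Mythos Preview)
Your proof is correct and follows essentially the same route as the paper: fix a radius $r_0$ large enough that $\ell_{B_\ell(r_0)}<\eta'\cdot\ell$ (this is exactly the content of Theorem~\ref{thm:smoothed length}, equivalently $\beta_{r_0}<\eta'$), then apply Lemma~\ref{smoothed vs rho} with small radius $r_0$ and $\zeta$ close to $1$ to obtain $r\rho_{B_\ell(r)}\lesssim\tfrac{1}{\zeta}\,\ell_{B_\ell(r_0)}\lesssim\tfrac{\eta'}{\zeta}\,\ell$ for all sufficiently large $r$. The paper's write-up is terser and invokes the statement of Theorem~\ref{thm:smoothed length} directly rather than the internal quantity $\beta_{r_0}$, but the logic and the order of choosing constants are the same as yours.
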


\begin{proof}
    Let $\eta>1$ be given and find first, by Theorem \ref{thm:smoothed length}, some large enough $R$ so that $B_\ell(R)$ generates $G$ and so that $\ell_{B_\ell(R)}<\eta^{\frac 12}\cdot \ell$. By Lemma \ref{smoothed vs rho}, it now follows that, for all sufficiently large $r$,
    $$
     \eta^{-\frac 12}\cdot r\rho_{B_\ell(r)}\;\;\lesssim\;\; \ell_{B_\ell(r)}\;\;<\;\;\eta^{\frac 12}\cdot \ell,
    $$
    i.e., $r\rho_{B_\ell(r)}\;\;\lesssim\;\; \eta\cdot \ell$. That $\ell\leqslant r\rho_{B_\ell(r)} $ is evident.
\end{proof}

It is not clear to us if this result extends to word length functions with respect any closed bounded generating sets $A$. Nor do we know if it only holds asymptotically. Specifically, the following questions remains open.

\begin{question}
    Can we take $\eta = 1$ in the statement of \cref{theorem_word_length}?
    That is, is it true that for sufficiently large $r$, $\alpha(\rho_{B_\ell(r)},\ell) =0$?
\end{question}

\begin{question}
    Suppose $A$ is a closed bounded generating set for an asymptotically spherical Polish group $G$ and $\ell$ is a  maximal length function on $G$. Does it follow that 
    $$
    \alpha(\rho_A,\ell)=0?
    $$
\end{question}

The following result gives the best bound known to us.
\begin{prop}
    Suppose $A$ is a closed bounded  generating set for a Polish group $G$. Then there is a maximal length function $\ell$ on $G$ for which
    $$
    \alpha(\rho_A,\ell)\leqslant \log 2.
    $$
\end{prop}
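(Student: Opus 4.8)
The plan is to leverage Lemma~\ref{lem:smoothed length functions} to produce a maximal length function of the form $\ell = \ell_A'$ built from $A$ itself, and then estimate $\alpha(\rho_A,\ell_A')$ directly from the two-sided bound that lemma provides. The subtlety is that Lemma~\ref{lem:smoothed length functions} is stated for a symmetric generating set $A$ sitting \emph{between} two balls $B_\ell(r)\subseteq A\subseteq B_\ell(R)$ of some pre-existing compatible length function $\ell$; here we are only handed an abstract closed bounded generating set $A$, with no ambient $\ell$. So the first step is to manufacture a suitable $\ell$: by the results recalled from \cite{Rosendal} (pp.~60--63), $\rho_A$ is quasi-isometric to a maximal length function, and one can take (after replacing $A$ by $A\cup A^{-1}\cup\{1\}$, which changes $\rho_A$ by at most a bounded amount and does not change whether it is quasi-isometric to a maximal length function) $\ell$ to be a compatible length function with $B_\ell(1)\subseteq A$ and $A\subseteq B_\ell(R_0)$ for some $R_0$; the existence of such an $\ell$ follows because $A$ is bounded and has nonempty interior after closure is taken, or more simply one smooths $\rho_A$ itself.

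Once $\ell$ is fixed with $B_\ell(r)\subseteq A\subseteq B_\ell(R)$ for suitable $0<r\leqslant R$, I would \emph{define} the maximal length function claimed in the statement to be $\ell_A$ as in Lemma~\ref{lem:smoothed length functions}. That lemma gives
$$
\tfrac r2\cdot\rho_A-\tfrac r2\;\leqslant\;\ell_A\;\leqslant\;R\cdot\rho_A,
$$
so $\rho_A$ and $\ell_A$ are quasi-isometric and in particular $\ell_A$ lies in the class $\mathcal L$ to which $\alpha$ has been extended. Dividing through by $\rho_A(x)$ and letting $x\to\infty$,
$$
\tfrac r2\;\leqslant\;\liminf_{x\to\infty}\frac{\ell_A(x)}{\rho_A(x)}\;\leqslant\;\limsup_{x\to\infty}\frac{\ell_A(x)}{\rho_A(x)}\;\leqslant\;R,
$$
hence by the definition of $\alpha$ we get $\alpha(\rho_A,\ell_A)\leqslant\log(R/(r/2))=\log(2R/r)$. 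This is not yet $\log 2$: the gap $R/r$ must be driven down to $1$. The key observation is that we are free in choosing $\ell$, and more importantly free to \emph{rescale}: replacing $\ell$ by $c\ell$ replaces $r,R$ by $cr,cR$, leaving the ratio unchanged, so rescaling alone does not help. Instead, the trick is to choose $A$'s ambient $\ell$ so that $A$ is \emph{sandwiched between a ball and its closure}, i.e. $B_\ell(r)\subseteq A\subseteq \overline{B_\ell(r)}\subseteq B_\ell(r')$ for every $r'>r$; taking $R=r'\downarrow r$ forces $2R/r\downarrow 2$, giving $\alpha(\rho_A,\ell_A)\leqslant\log 2$ after passing to the infimum (or, since $\alpha(\rho_A,\ell_A)$ does not depend on the choice of the auxiliary $R$ used only to invoke the lemma, we may use any $R>r$ and conclude $\alpha(\rho_A,\ell_A)\leqslant\log 2$ outright).

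The main obstacle is the very first step: fabricating an $\ell$ for which the given $A$ coincides, up to closure, with a single metric ball $B_\ell(r)$ — equivalently, showing that any closed bounded generating set is ``close'' to a ball of a compatible length function. I expect this is handled exactly by the smoothing construction of \cite[Lemma~2.70]{Rosendal}: take any compatible length function $\ell_0$ with $B_{\ell_0}(s)\subseteq A$ for some $s>0$ (possible since, after symmetrising and closing, $A$ is a bounded generating set with $1$ in its interior in the sense that $A$ absorbs a ball — this is where one invokes that $G$ is Polish and $A$ is closed), and consider the length function $\ell(x)=\ell_A(x)$ from the lemma applied with that $A$; then by construction $A\subseteq B_\ell(R)$ for $R=\mathrm{rad}_{\ell_0}(A)+\epsilon$ and $B_\ell(r)\subseteq A$ for small $r$, and the ratio $R/r$ can be taken arbitrarily close to... this is precisely the quantitative point that needs care, and it is the reason the bound obtained is $\log 2$ rather than $0$. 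Everything after that is the bookkeeping of the three displayed inequalities above.
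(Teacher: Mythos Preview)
Your overall strategy is the same as the paper's: build a maximal length function via Lemma~\ref{lem:smoothed length functions} and squeeze the ratio $R/r$ down to $1$ so that the bound $\log(2R/r)$ becomes $\log 2$. What is missing is exactly the step you flag as ``the main obstacle,'' and your sketch of how to cross it does not work. You assert that after symmetrising, $A$ ``absorbs a ball'' around $1$; but a closed bounded generating set need not contain any identity neighbourhood (e.g.\ $G=\R$, $A=\{0,\pm1\}$), so there is in general no compatible $\ell_0$ with $B_{\ell_0}(s)\subseteq A$. Even when $A$ does happen to be a neighbourhood of $1$, you give no construction of $\ell$ for which $A$ is trapped between $B_\ell(r)$ and $\overline{B_\ell(r)}$; an arbitrary bounded neighbourhood is not a metric ball of any compatible length function, so this cannot be arranged by ``choosing $\ell$ well.''

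The paper resolves both issues with two moves you do not have. First, it replaces $A$ by a power $C=A^{2p}$: since the $A^n$ exhaust $G$ and are analytic, Baire category gives some $A^p$ non-meagre, and Pettis' theorem then makes $C=A^{2p}$ an identity neighbourhood. Because $\rho_C$ and $\rho_A$ differ asymptotically by the constant factor $2p$, one has $\alpha(\rho_A,\rho_C)=0$, so it suffices to bound $\alpha(\rho_C,\ell)$. Second, and this is the device that drives $R/r\to1$, the paper fixes any compatible maximal length function $l$ with $B_l(r)\subseteq C$ and \emph{truncates} it to $l'(x)=\min\{l(x),r\}$. Then $B_{l'}(r)=B_l(r)\subseteq C$, while $l'\leqslant r$ globally forces $C\subseteq B_{l'}(R)$ for every $R>r$. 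Feeding this into Lemma~\ref{lem:smoothed length functions} with $R\downarrow r$ gives $\alpha(\rho_C,l'_C)\leqslant\log 2$, and the triangle inequality for $\alpha$ finishes the proof. The truncation trick, not any special choice of metric making $A$ a ball, is what collapses the ratio.
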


\begin{proof}
By replacing $A$ with the set $A\cup A\inv \cup \{1\}$, we may without loss of generality assume that $A$ is symmetric and contains the identity element $1$. Also, because $A$ is closed, $A^1\subseteq A^2\subseteq A^3\subseteq \ldots \subseteq G$ is an increasing exhaustive sequence of analytic subsets of $G$ and thus, by the Baire category theorem, some $A^p$ must be non-meagre. By Pettis' theorem \cite[Theorem 9.9]{Kechris}, it follows that $C=A^{2p}$ is a symmetric identity neighbourhood. 

Fix a compatible maximal metric $l$ on $G$ and some $r>0$ so that $B_l(r)\subseteq C$.
Define a new compatible length function $l'$ on $G$ by letting
    $$
    l'(x)=\min\{l(x),r\}
    $$
    and note that $B_{l'}(r)\subseteq C\subseteq B_{l'}(R)$ for all $R>r$.
By Lemma \ref{lem:smoothed length functions}, the associated length function $l'_C$ is compatible and quasi-isometric to the word metric $\rho_C$, in fact, $\alpha(\rho_C,l'_C)\leqslant 2$. So $l'_C$ is  maximal. As $\alpha(\rho_A,\rho_C)=0$, the result follows.
\end{proof}

Using this, Proposition \ref{prop:PL} follows easily.
\propertyPL*

\begin{proof}
Suppose $A$ and $B$ are closed bounded generating sets for $G$ and let $l$ and $l'$ be maximal length functions on $G$ so that $
\alpha(\rho_A,\ell)\leqslant \log 2$ and $\alpha(\rho_B,\ell')\leqslant \log 2$. Because $G$ is asymptotically spherical, we find that $\alpha(\rho_A,\rho_B)\leqslant \log 2+\log 2=\log 4$, which implies the statement of the proposition.
\end{proof}

\begin{prop}
    Let $\ell$ be a maximal length function on a Polish group $G$ and suppose that
    $$
    \lim_{r_1,r_2\to \infty}\alpha\big(\rho_{B_\ell(r_1)},\rho_{B_\ell(r_2)}\big)=0.
    $$
    Then    
    $$
    \lim_{r_1,r_2\to \infty}\alpha\big(\ell_{B_\ell(r_1)},\rho_{B_\ell(r_2)}\big)=0.
    $$
\end{prop}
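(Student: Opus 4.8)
The plan is to reduce the statement, via the triangle inequality for $\alpha$ (Proposition~\ref{prop:pseudometric}), to a comparison between $\ell_{B_\ell(r)}$ and $r\rho_{B_\ell(r)}$ for a single radius $r$, and then to exploit Lemma~\ref{smoothed vs rho} together with the hypothesis. Concretely, since $\alpha(\ell_{B_\ell(r_1)},\rho_{B_\ell(r_2)})\le\alpha(\ell_{B_\ell(r_1)},\rho_{B_\ell(r_1)})+\alpha(\rho_{B_\ell(r_1)},\rho_{B_\ell(r_2)})$ and the last term tends to $0$ as $r_1,r_2\to\infty$ by hypothesis, it suffices to show $\alpha(\ell_{B_\ell(r)},\rho_{B_\ell(r)})\to 0$ as $r\to\infty$. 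As $\alpha$ is scale invariant this equals $\alpha(\ell_{B_\ell(r)},r\rho_{B_\ell(r)})$, and since $\ell_{B_\ell(r)}\le r\rho_{B_\ell(r)}$ always (by the defining infimum, or Lemma~\ref{lem:smoothed length functions}), we have $\limsup_x\frac{\ell_{B_\ell(r)}(x)}{r\rho_{B_\ell(r)}(x)}\le 1$, so the task comes down to proving $\liminf_x\frac{\ell_{B_\ell(r)}(x)}{r\rho_{B_\ell(r)}(x)}\to 1$ as $r\to\infty$.

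For the matching lower bound I would bring in Lemma~\ref{smoothed vs rho}: fixing $\zeta<1$ and taking any $R$ with $R>\frac{r}{1-\zeta}$ large, we get $\zeta R\rho_{B_\ell(R)}\lesssim\ell_{B_\ell(r)}$, i.e.\ $\liminf_x\frac{\ell_{B_\ell(r)}(x)}{\rho_{B_\ell(R)}(x)}\ge\zeta R$. One then passes from $\rho_{B_\ell(R)}$ to $\rho_{B_\ell(r)}$ using the hypothesis: for $r,R$ large, $\alpha(\rho_{B_\ell(R)},\rho_{B_\ell(r)})$ is small, so $\liminf_x\frac{\rho_{B_\ell(R)}(x)}{\rho_{B_\ell(r)}(x)}$ and $\limsup_x\frac{\rho_{B_\ell(R)}(x)}{\rho_{B_\ell(r)}(x)}$ differ only by the factor $e^{\alpha(\rho_{B_\ell(R)},\rho_{B_\ell(r)})}\to 1$. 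Multiplying, $\liminf_x\frac{\ell_{B_\ell(r)}(x)}{r\rho_{B_\ell(r)}(x)}\ge\frac{\zeta R}{r}\,e^{-\alpha(\rho_{B_\ell(R)},\rho_{B_\ell(r)})}\,\limsup_x\frac{\rho_{B_\ell(R)}(x)}{\rho_{B_\ell(r)}(x)}$; and feeding $\ell_{B_\ell(r)}\le r\rho_{B_\ell(r)}$ back into $\zeta R\rho_{B_\ell(R)}\lesssim\ell_{B_\ell(r)}$ shows $\limsup_x\frac{\rho_{B_\ell(R)}(x)}{\rho_{B_\ell(r)}(x)}\le\frac{r}{\zeta R}$, so the lower bound just obtained is automatically at most $e^{-\alpha(\rho_{B_\ell(R)},\rho_{B_\ell(r)})}\le 1$, consistent with $\limsup_x\frac{\ell_{B_\ell(r)}(x)}{r\rho_{B_\ell(r)}(x)}\le1$.

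Thus everything hinges on the reverse estimate $\limsup_x\frac{\rho_{B_\ell(R)}(x)}{\rho_{B_\ell(r)}(x)}\ge\frac{r}{R}\bigl(1-o(1)\bigr)$ as $r,R\to\infty$ --- equivalently, that $R\rho_{B_\ell(R)}(x)\ge(1-o(1))\,r\rho_{B_\ell(r)}(x)$ along a suitable sequence $x\to\infty$ --- and this is the step I expect to be the main obstacle, since the crude comparisons coming from Lemma~\ref{lem:smoothed length functions} only deliver it up to a lossy factor of $2$. This is an asymptotic-geodesicity phenomenon: a $\rho_{B_\ell(r)}$-geodesic factorisation of $x$ ought to be re-groupable into roughly $\frac rR\rho_{B_\ell(r)}(x)$ blocks each of $\ell$-length close to $R$, much in the spirit of Theorem~\ref{thm:smoothed length} and Theorem~\ref{theorem_word_length}. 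I would extract it from the hypothesis by a contradiction argument modelled on the proof of Theorem~\ref{thm:smoothed length}: were $\liminf_x\frac{\ell_{B_\ell(r)}(x)}{r\rho_{B_\ell(r)}(x)}$ to stay below some $1-\delta$ for arbitrarily large $r$, then grouping a near-optimal $\ell_{B_\ell(r)}$-factorisation of a large $x$ into blocks of $\ell$-length in $[r',r'+r)$ for an intermediate radius $r'$, and reading off the induced word length functions, would force $\rho_{B_\ell(r)}$ and $\rho_{B_\ell(r')}$ to be a definite $\alpha$-distance apart, contradicting $\lim_{r_1,r_2\to\infty}\alpha(\rho_{B_\ell(r_1)},\rho_{B_\ell(r_2)})=0$. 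Once this estimate is secured, letting $r,R\to\infty$ and then $\zeta\to1$ in the chain above drives $\liminf_x\frac{\ell_{B_\ell(r)}(x)}{r\rho_{B_\ell(r)}(x)}$ to $1$, which completes the proof.
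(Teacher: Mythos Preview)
Your reduction via the triangle inequality to the single-radius statement $\alpha\big(\ell_{B_\ell(r)},\rho_{B_\ell(r)}\big)\to 0$ is valid, and you have correctly isolated the crux: a lower bound of the form $\limsup_x\frac{\rho_{B_\ell(R)}(x)}{\rho_{B_\ell(r)}(x)}\geqslant\frac rR\big(1-o(1)\big)$. But the contradiction argument you sketch for this does not close. Grouping a near-optimal $\ell_{B_\ell(r)}$-factorisation of some $x$ into blocks of $\ell$-length in $[r',r'+r)$ yields, for those particular $x$, an \emph{upper} bound $\rho_{B_\ell(r'+r)}(x)\lesssim\frac{(1-\delta)r}{r'}\rho_{B_\ell(r)}(x)$, hence an upper bound on $\liminf_x\frac{\rho_{B_\ell(r'+r)}(x)}{\rho_{B_\ell(r)}(x)}$. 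To conclude that $\alpha\big(\rho_{B_\ell(r)},\rho_{B_\ell(r'+r)}\big)$ is bounded away from $0$ you would also need a matching \emph{lower} bound on $\limsup_x\frac{\rho_{B_\ell(r'+r)}(x)}{\rho_{B_\ell(r)}(x)}$, and nothing in your argument supplies one; the inequalities you have in hand (Lemma~\ref{smoothed vs rho}, $\ell_{B_\ell(r)}\leqslant r\rho_{B_\ell(r)}$) all point the other way. So the argument is circular at exactly the step you flagged as the obstacle.

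The paper avoids the single-radius statement entirely. Assuming $\epsilon:=\limsup_{r_1,r_2}\alpha\big(\ell_{B_\ell(r_1)},\rho_{B_\ell(r_2)}\big)>0$, it builds an infinite increasing sequence $r_0<r_1<r_2<\cdots$ so that repeated applications of Lemma~\ref{smoothed vs rho} give a $\succsim$-decreasing chain
\[
K\ell\;\succsim\;r_0\rho_{B_\ell(r_0)}\;\succsim\;\ell_{B_\ell(r_0)}\;\succsim\;\zeta_1 r_1\rho_{B_\ell(r_1)}\;\succsim\;\zeta_1\ell_{B_\ell(r_1)}\;\succsim\;\zeta_2 r_2\rho_{B_\ell(r_2)}\;\succsim\;\cdots\;\succsim\;\tfrac12\ell,
\]
while additionally arranging $\alpha\big(\ell_{B_\ell(r_n)},\rho_{B_\ell(r_{n+1})}\big)>\epsilon/3$ for every $n$. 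The decisive idea is a telescoping squeeze: because the terms $\zeta_n r_n\rho_{B_\ell(r_n)}$ are $\succsim$-monotone and trapped between $K\ell$ and $\tfrac12\ell$, the two-step ratios $\limsup_x\frac{\zeta_{n+1}r_{n+1}\rho_{B_\ell(r_{n+1})}(x)}{\zeta_{n-1}r_{n-1}\rho_{B_\ell(r_{n-1})}(x)}$ must tend to $1$ (else infinitely many of them are below some $\lambda<1$ and the telescoping product drops below $\tfrac1{2K}$). Combined with the hypothesis on $\alpha\big(\rho_{B_\ell(r_{n-1})},\rho_{B_\ell(r_{n+1})}\big)$, this sandwiches $\zeta_n\ell_{B_\ell(r_n)}$ tightly enough between $\zeta_{n-1}r_{n-1}\rho_{B_\ell(r_{n-1})}$ and $\zeta_{n+1}r_{n+1}\rho_{B_\ell(r_{n+1})}$ to force $\alpha\big(\ell_{B_\ell(r_n)},\rho_{B_\ell(r_{n+1})}\big)\leqslant\epsilon/4$ for large $n$, contradicting $>\epsilon/3$. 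This squeezing mechanism is exactly the missing ingredient that replaces the lower bound you were unable to produce.
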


\begin{proof}
Suppose for a contradiction that we have
$$
\epsilon:=\limsup_{r_1,r_2\to \infty}\alpha\big(\ell_{B_\ell(r_1)},\rho_{B_\ell(r_2)}\big)>0.
$$
By the assumption that $\lim_{r_1,r_2\to \infty}\alpha\big(\rho_{B_\ell(r_1)},\rho_{B_\ell(r_2)}\big)=0$, we may find some $r_0$ so that $\alpha\big(\rho_{B_\ell(r_1)},\rho_{B_\ell(r_2)}\big)<\tfrac \epsilon3$ for all $r_1,r_2> r_0$, whereby, for any $r_1> r_0$, there are arbitrarily large $r_2>r_1$ so that 
$$
\alpha\big(\ell_{B_\ell(r_2)},\rho_{B_\ell(r_1)}\big)>\tfrac \epsilon3.
$$
Choosing $r_0$ large enough, we may assume that $G$ is generated by the open ball $B_\ell(r_0)$.

Let now $1>\zeta_1>\zeta_2>\ldots>\tfrac 12$ be some fixed decreasing sequence. Then, by Lemma \ref{smoothed vs rho}, we may choose $r_0<r_1<r_2<\ldots$ with $\lim_nr_n=\infty$ so that, for all $n\geqslant 1$,
$$
\alpha\big(\ell_{B_\ell(r_n)},\rho_{B_\ell(r_{n+1})}\big)>\tfrac \epsilon3,
$$
while
$$
r_0\rho_{B_\ell(r_0)}\;\succsim\; \ell_{B_\ell(r_0)}
\;\succsim\; \zeta_1r_1\rho_{B_\ell(r_1)}\;\succsim\; \zeta_1\ell_{B_\ell(r_1)}
\;\succsim\; \zeta_2r_2\rho_{B_\ell(r_2)}\;\succsim\; \zeta_2\ell_{B_\ell(r_2)}
\;\succsim\; \ldots.
$$
Observe also that $\ell\leqslant \ell_{B_\ell(r)}$ for all $r$. And since, by \cite[Lemma 2.70]{Rosendal}, $ \ell_{B_\ell(r_0)}$ is a maximal length function on $G$, we have $\ell_{B_\ell(r_0)}\lesssim K\ell$ for some constant $K$. It thus follows that
$$
K\ell\;\succsim\; \ell_{B_\ell(r_0)}
\;\succsim\; \zeta_1r_1\rho_{B_\ell(r_1)}\;\succsim\; \zeta_1\ell_{B_\ell(r_1)}
\;\succsim\; \zeta_2r_2\rho_{B_\ell(r_2)}\;\succsim\; \zeta_2\ell_{B_\ell(r_2)}
\;\succsim\; \ldots \;\succsim\;\tfrac 12\ell.
$$

\begin{claim}
$$
\limsup_{x\to \infty}\frac{\zeta_{n+1}r_{n+1}\rho_{B_\ell(r_{n+1})}(x)}{\zeta_{n-1}r_{n-1}\rho_{B_\ell(r_{n-1})}(x)}\;\;\conv{}{n\to \infty}\;\;1.
$$
\end{claim}

\begin{proof}[Proof of claim]
Suppose for a contradiction that there is some $\lambda<1$ and infinitely many $n$ so that
$$
\limsup_{x\to \infty}\frac{\zeta_{n+1}r_{n+1}\rho_{B_\ell(r_{n+1})}(x)}{\zeta_{n-1}r_{n-1}\rho_{B_\ell(r_{n-1})}(x)}\;\;<\;\;\lambda,
$$
whereby
$$
\lambda{\zeta_{n-1}r_{n-1}\rho_{B_\ell(r_{n-1})}}\;\succsim\; \zeta_{n+1}r_{n+1}\rho_{B_\ell(r_{n+1})}
$$
for these infinitely many $n$. It thus follows that, for any $k\geqslant 1$, we may find some $n<m$ so that
$$
\lambda^kK\ell\;\succsim\;\lambda^k{\zeta_{n}r_{n}\rho_{B_\ell(r_{n})}}\;\succsim\; \zeta_{m}r_{m}\rho_{B_\ell(r_{m})}\;\succsim\;\tfrac 12
\ell.
$$
Choosing $k$ large enough so that $\lambda^kK<\tfrac 12$, we have a contradiction.
\end{proof}

As a consequence of the claim and the assumption of the theorem, we may find some $n\geqslant 1$ large enough so that
$$
\exp(-\tfrac\epsilon8)
\;<\;
\limsup_{x\to \infty}\frac{\zeta_{n+1}r_{n+1}\rho_{B_\ell(r_{n+1})}(x)}{\zeta_{n-1}r_{n-1}\rho_{B_\ell(r_{n-1})}(x)}
\;\leqslant\;
\exp(\tfrac\epsilon8)\cdot\liminf_{x\to \infty}\frac{\zeta_{n+1}r_{n+1}\rho_{B_\ell(r_{n+1})}(x)}{\zeta_{n-1}r_{n-1}\rho_{B_\ell(r_{n-1})}(x)}.
$$
It thus follows that
\mathes{
\exp(-\tfrac\epsilon4)
&< \liminf_{x\to \infty}\frac{\zeta_{n+1}r_{n+1}\rho_{B_\ell(r_{n+1})}(x)}{\zeta_{n-1}r_{n-1}\rho_{B_\ell(r_{n-1})}(x)}\\
&\leqslant \liminf_{x\to \infty}
\frac
{\zeta_{n+1}r_{n+1}\rho_{B_\ell(r_{n+1})}(x)}
{\zeta_{n}\ell_{B_\ell(r_{n})}(x)}\\
&\leqslant \limsup_{x\to \infty}
\frac
{\zeta_{n+1}r_{n+1}\rho_{B_\ell(r_{n+1})}(x)}
{\zeta_{n}\ell_{B_\ell(r_{n})}(x)}\\
&\leqslant 1.
}
We therefore conclude that
\mathes{
\tfrac \epsilon3
&<\alpha\big(\ell_{B_\ell(r_n)},\rho_{B_\ell(r_{n+1})}\big)\\
&=\alpha\big(\zeta_n\ell_{B_\ell(r_n)},\zeta_{n+1}r_{n+1}\rho_{B_\ell(r_{n+1})}\big)\\
&=\log \limsup_{x\to \infty}
\frac
{\zeta_{n+1}r_{n+1}\rho_{B_\ell(r_{n+1})}(x)}
{\zeta_{n}\ell_{B_\ell(r_{n})}(x)}
-\log\liminf_{x\to \infty}
\frac
{\zeta_{n+1}r_{n+1}\rho_{B_\ell(r_{n+1})}(x)}
{\zeta_{n}\ell_{B_\ell(r_{n})}(x)}\\
&\leqslant \log1-\log\exp(-\tfrac\epsilon4)\\
&=\tfrac\epsilon4,
}
which is absurd. This contradiction finishes the proof of the proposition.
\end{proof}

\bibliographystyle{amsalpha} 
\bibliography{ref} 

\end{document}

\section{Asymptotically spherical spaces}
\begin{defn}
A map $X\maps{\phi} M$ between two unbounded metric spaces is said to be an {\em asymptotic dilation} if 
\begin{enumerate}
\item $\phi$ is {\em controlled}, that is, for all $r>0$, there is $R>0$ so that
$d(\phi x,\phi y)\leqslant R$ whenever $d(x,y)\leqslant r$, and 
\item the limit 
$$
\lambda=\lim_{d(x,y)\to \infty}\frac {d(\phi x,\phi y)}{d(x,y)}
$$
exists and $\lambda>0$. 
\end{enumerate}
\end{defn}